\newtheorem{thm}{Theorem}[section]
\newtheorem{cor}[thm]{Corollary}
\newtheorem{lem}[thm]{Lemma}
\newtheorem{proposition}[thm]{Proposition}
\newtheorem{example}[thm]{Example}
\theoremstyle{definition}
\newtheorem{definition}[thm]{Definition}
\theoremstyle{remark}
\newtheorem{remark}[thm]{Remark}
\numberwithin{equation}{section}
\newcommand{\dl}{\displaystyle}
\begin{document}

\title[Some constructions of multiplicative $n$-ary Hom-Nambu algebras]
{ Some constructions of multiplicative  $n$-ary Hom-Nambu algebras}%
\author{Abdelkader Ben Hassine,  Sami Mabrouk, Othmen Ncib }%
\address{Abdelkader Ben Hassine,Faculty of Sciences, University of Sfax,   BP
1171, 3000 Sfax, Tunisia\ \ \ \ \ \ \ \ \ \ \ \ \ \ \ \ \ \ \ \ \ \ \ \&
Department of Mathematics, Faculty of Science and Arts at
Belqarn, P. O. Box 60, Sabt Al-Alaya 61985, Bisha University, Saudi Arabia}%
\email{benhassine.abdelkader@yahoo.fr}
 \address{Sami Mabrouk, Othmen Ncib,  Faculty of Sciences, University of Gafsa,   BP
2100, Gafsa, Tunisia}%
\email{Mabrouksami00@yahoo.fr, othmenncib@yahoo.fr}


 \subjclass[2000]{17A30,17A36,17A40,17A42}
\keywords{$n$-ary Nambu algebra, Hom-Lie triple system, Hom-Lie $n$-uplet system, derivations, quasiderivations}
\date{}
%
\maketitle
\begin{abstract}

We show that  given a  Hom-Lie algebra one can construct the n-ary Hom-Lie
bracket by means of an (n-2)-cochain of given Hom-Lie algebra and find the
conditions under which this n-ary bracket satisfies the Filippov-Jacobi
identity, there by inducing the structure of n-Hom-Lie algebra. We introduce the notion of  Hom-Lie $n$-uplet system which is the generalization of Hom-Lie triple system. We construct  Hom-Lie $n$-uplet system using a Hom-Lie algebra.
\end{abstract}


\section*{Introduction}

\

\hspace{0.5cm}The first instances of n-ary algebras in Physics appeared with a generalization of the Hamiltonian mechanics proposed
in 1973 by Nambu \cite{Nam}. More recent motivation comes from string theory and M-branes involving naturally an algebra with
ternary operation called Bagger-Lambert algebra which gives impulse to a significant development. It was used in \cite{Bagger&Lambert} as
one of the main ingredients in the construction of a new type of supersymmetric gauge theory that is consistent with all the
symmetries expected of a multiple M2-brane theory: 16 supersymmetries, conformal invariance, and an SO(8) R-symmetry
that acts on the eight transverse scalars. On the other hand in the study of supergravity solutions describing M2-branes
ending on M5-branes, the Lie algebra appearing in the original Nahm equations has to be replaced with a generalization
involving ternary bracket in the lifted Nahm equations(see \cite{Basu&Harvey}).


\hspace{0.5cm} In \cite{Ataguema&Makhlouf&Silvestrov}, generalizations of n-ary algebras
of Lie type and associative type by twisting the identities using linear
maps have been introduced. The notion  of representations, derivations, cohomology and deformations are studies in \cite{Arnlind&kitoni&makhlouf&Silvi,Daletskii&Takhtajan,Gautheron,Ma&Chen, Takhtajan1}. These generalizations include n-ary Hom-algebra
structures generalizing the n-ary algebras of Lie type including n-ary Nambu
algebras, n-Lie algebra (called also n-ary Nambu-Lie algebras)  and n-ary Lie algebras, and n-ary algebras
of associative type including n-ary totally associative and n-ary partially
associative algebras. In \cite{Arnlind&Makhlouf&Silvestrov}, a method was demonstrated of how to construct
ternary multiplications from the binary multiplication of a Hom-Lie algebra,
a linear twisting map, and a trace function satisfying certain compatibility
conditions; and it was shown that this method can be used to construct
ternary Hom-Nambu-Lie algebras from Hom-Lie algebras. This construction
was generalized to n-Lie algebras and n-Hom-Nambu-Lie algebras in \cite{Arnlind&Makhlouf&Silvestrov1}.

\

\hspace{0.5cm} It is well known that the algebras of derivations and generalized derivations are very important in the
study of Lie algebras and its generalizations. The notion of $\delta$-derivations appeared in the paper of
Filippov \cite{Filippov}. The results for $\delta$-derivations and generalized derivations were studied by many authors.
For example, R.Zhang and Y.Zhang \cite{Zhang&Zhang} generalized the above results to the case of Lie superalgebras; Chen,
Ma, Ni and Zhou considered the generalized derivations of color Lie algebras, Hom-Lie superalgebras
and Lie triple systems \cite{Chen&Ma&Ni,Chen&Ma&Zhou}. Derivations and generalized derivations of n-ary algebras were considered
in \cite{Kaygorodov,Kaygorodov1} and other. In \cite{Beites&Kaygorodov&Popov}, the authors generalize this results in  Color n-ary Nambu Hom case.

\

\hspace{0.5cm}This paper is organized as follows. In Section I we review basic concepts of Hom-Lie, $n$-ary Hom-Nambu algebras and $n$-Hom-Lie algebras. We also recall the definitions of derivations, $\alpha^k$-derivations, $\alpha^k$-quasiderivations and $\alpha^k$-centroid. In Section II we provide a construction
procedure of n-Hom-Lie algebras starting from a binary
bracket of a Hom-Lie algebra and multilinear  form satisfying certain  conditions. To this end, we give the relation between $\alpha^k$-derivations, (resp. $\alpha^k$-quasiderivations and $\alpha^k$-centroid) of   Hom-Lie algebra and  $\alpha^k$-derivations, (resp. $\alpha^k$-quasiderivations and $\alpha^k$-centroid) of n-Hom-Lie algebras. In Section III we introduce the notion of Hom-Lie $n$-uplet system which is the generalisation of Lie $n$-uplet system which is introduced in \cite{Azcarraga&Izquierdo}. We construct  Hom-Lie $n$-uplet system using Hom-Lie algebra. Finally, we give a relation between $\alpha^k$-quasiderivations of a Hom-Lie algebra and $(n + 1)$-ary
$\alpha^k$-derivation of  Hom-Lie $n$-uplet system associate.

\section{Hom-Lie algebras and n-ary Hom-Nambu algebras}

\

\hspace{0.5cm}  Throughout this paper, we will for simplicity of exposition assume that $\mathbb{K}$ is an algebraically closed
field of characteristic zero, even though for most of the general definitions and results in the paper this
assumption is not essential.
\subsection{Definitions}
The notion of a Hom-Lie algebra was initially motivated by examples of deformed Lie
algebras coming from twisted discretizations of vector fields(see \cite{Hartwig&Larsson&Silvestrov,Larsson&Silvestrov}). We will follow notation conventions
in \cite{Makhlouf&Silvestrov1}.
\begin{definition}
A Hom-Lie algebra is a triple $(\mathfrak g, [~,~],\alpha)$, where $[~
,~]:\mathfrak g\times \mathfrak g\rightarrow\mathfrak g$ is a bilinear
map and $\alpha:\mathfrak g\rightarrow\mathfrak g$ a linear map satisfying
$$
[x,y] = - [y,x] ~~~~~~~~~~~~~~~~~~~~\text{(skew-symmetry)}$$
$$\displaystyle\circlearrowleft_{x,y,z}[\alpha(x),[y,z]]=0~~~~~~~~~~~~~~~~~~~~~~~\text{(Hom-Jacobi~ condition)}$$
for all x, y ,z from $\mathfrak g$, where $\displaystyle\circlearrowleft_{x,y,z}$ denotes summation over the cyclic permutations of $x, y ,z$.

\end{definition}
\begin{definition}
  A Hom-Lie algebras $(\mathfrak{g},[~,~],\alpha)$ is called multiplicative if $\alpha([x,y])=[\alpha(x),\alpha(y)]$ for all $x,y\in \mathfrak{g}$.
\end{definition}
We define a linear map $ad:\mathfrak{g}\rightarrow End(\mathfrak{g})$ by $\text{ad}_x(y)=[x,y]$. Thus the Hom-Jacobi identity is equivalent to  \begin{equation}\label{RepresentationAdjointe}
\text{ad}_{[x,y]}(\alpha(z))=\text{ad}_{\alpha(x)}\circ \text{ad}_y(z)- \text{ad}_{\alpha(y)}\circ \text{ad}_x(z),\ \textrm{for\ all} \ x,y,z\in\mathfrak{g}.
           \end{equation}
           \begin{remark}
             An ordinary Lie algebras is a Hom-Lie algebas when $\alpha=id$.
           \end{remark}
           \begin{example}
             Let $\mathcal{A}$ be the complex algebra  where $\mathcal{A}= \mathbb{C}[t, t^{-1}]$ is the Laurent polynomials
in one variable.  The generators of $\mathcal{A}$ are of the form $t^n$ for $n \in \mathbb{Z}$.\\
Let $q\in \mathbb{C}\backslash\{0, 1\}$ and $n\in  \mathbb{N}$, we set $\{n\} = \frac{1-q^n}{1-q}$, a $q$-number. The $q$-numbers have the following
properties $\{n + 1\} = 1 + q\{n\} = \{n\} + q^n$ and $\{n + m\} = \{n\} + q^n\{m\}$.

Let $\mathfrak{A}_q$  be a space with basis $\{L_m,\ I_m| m\in\mathbb{Z}\}$  where $L_m=-t^mD,\ I_m=-t^m$ and $D$ is a $q$-derivation on $\mathcal{A}$ such that
$$D(t^m)=\{m\}t^m.$$
We define the bracket  $[\ ,\ ]_q:\mathfrak{A}_q\times\mathfrak{A}_q\longrightarrow\mathfrak{A}_q$, with respect the super-skew-symmetry for $n,m\in\mathbb{Z}$  by
\begin{align}
&\label{crochet1}[L_m,L_n]_q=(\{m\}-\{n\})L_{m+n},\\
&\label{crochet2}[L_m,I_n]_q=-\{n\}I_{m+n},\\
&\label{crochet2}[I_m,I_n]_q=0.
\end{align}
Let $\alpha$ be an even linear map on $\mathfrak{A}_q$  defined on the generators by
\begin{eqnarray*}
\alpha_q(L_n)&=&(1+q^n)L_n,\hskip0.5cm \alpha_q(I_n)=(1+q^n)I_n,
\end{eqnarray*}
The triple $(\mathfrak{A}_q, [\ ,\ ]_q, \alpha_q)$ is a Hom-Lie algebra, called the
{$q$-deformed Heisenberg-Virasoro algebra of Hom-type}.

           \end{example}
           \begin{example}\label{Sl2HomLieExemple}
             We consider the matrix construction of the algebra $\mathfrak{sl}_2(\mathbb{R})$ generated by the following three vectors
$$H=\left(
      \begin{array}{cc}
        1 & 0 \\
       0 & -1 \\
      \end{array}
    \right)\;;\;
    X=\left(
        \begin{array}{cc}
          0  & 1 \\
          0 & 0 \\
        \end{array}
      \right)\;;\;
      Y=\left(
          \begin{array}{cc}
            0 & 0  \\
            1 & 0  \\
          \end{array}
        \right)
$$
The defining relations are
$$[H,X]=2X\;;\;[H,Y]=-2Y\;;\;[X,Y]=H$$
Let $\lambda\in\mathbb{R}^*$, consider the linear maps $\alpha_{\lambda}:\mathfrak{sl}_2(\mathbb{R})\rightarrow\mathfrak{sl}_2(\mathbb{R})$ defined by
$$\alpha_{\lambda}(H)=H\;\;;\;\;\alpha_{\lambda}(X)=\lambda^2X\;\;;\;\;\alpha_{\lambda}(Y)=\frac{1}{\lambda^2}Y$$
Note that $\alpha_{\lambda}$ is a Lie algebra automorphism.

In \cite{AmmarMakhloufJA2010}, the authors have shown that $(\mathfrak{sl}_2(\mathbb{R}))_\lambda=(\mathfrak{sl}_2(\mathbb{R}),[\ ,\ ]_{\alpha_\lambda},\alpha_\lambda)$ is a family of multiplicative Hom-Lie algebras where the Hom-Lie bracket $[\ ,\ ]_{\alpha_\lambda}$ on the basis elements is given, for $\lambda\neq0$ by
$$[H,X]_{\alpha_\lambda}=2\lambda^2X\;\;;\;\;[H,Y]_{\alpha_\lambda}=-\frac{2}{\lambda^2}Y\;\;;\;\;[X,Y]_{\alpha_\lambda}=H$$

           \end{example}

\

\hspace{0.5cm}Now, we recall the definitions  of $n$-ary Hom-Nambu algebras and $n$-ary Hom-Nambu-Lie algebras, generalizing of $n$-ary Nambu algebras and $n$-ary Nambu-Lie algebras
(called also Filippov algebras) respectively, which were  introduced in \cite{Ataguema&Makhlouf&Silvestrov} by Ataguema, Makhlouf and Silvestrov.
\begin{definition}
An \emph{$n$-ary Hom-Nambu} algebra is a triple $(\mathcal{N}, [\ ,..., \ ],  \widetilde{\alpha} )$ consisting of a vector space  $\mathcal{N}$, an
$n$-linear map $[\ ,..., \ ] :  \mathcal{N}^{ n}\longrightarrow \mathcal{N}$ and a family
$\widetilde{\alpha}=(\alpha_i)_{1\leq i\leq n-1}$ of  linear maps $ \alpha_i:\ \ \mathcal{N}\longrightarrow \mathcal{N}$, satisfying \\
  \begin{eqnarray}\label{NambuIdentity}
  && \big[\alpha_1(x_1),....,\alpha_{n-1}(x_{n-1}),[y_1,....,y_{n}]\big]= \\ \nonumber
&& \sum_{i=1}^{n}\big[\alpha_1(y_1),....,\alpha_{i-1}(y_{i-1}),[x_1,....,x_{n-1},y_i]
  ,\alpha_i(y_{i+1}),...,\alpha_{n-1}(y_n)\big],
  \end{eqnarray}
  for all $(x_1,..., x_{n-1})\in \mathcal{N}^{ n-1}$, $(y_1,...,  y_n)\in \mathcal{N}^{ n}.$\\
  The identity \eqref{NambuIdentity} is called \emph{Hom-Nambu identity}.
  \end{definition}

Let
$X=(x_1,\ldots,x_{n-1})\in \mathcal{N}^{n-1}$, $\widetilde{\alpha}
(X)=(\alpha_1(x_1),\ldots,\alpha_{n-1}(x_{n-1}))\in \mathcal{N}^{n-1}$ and
$y\in \mathcal{N}$. We define an adjoint map  $\text{ad}(X)$ as  a linear map on $\mathcal{N}$,
such that
\begin{equation}\label{adjointMapNaire}
\text{ad}_X(y)=[x_{1},\cdots,x_{n-1},y].
\end{equation}

Then the Hom-Nambu identity \eqref{NambuIdentity} may be written in terms of adjoint map as
\begin{equation*}
\text{ad}_{\widetilde{\alpha} (X)}( [x_{n},...,x_{2n-1}])=
\sum_{i=n}^{2n-1}{[\alpha_1(x_{n}),...,\alpha_{i-n}(x_{i-1}),
\text{ad}_X(x_{i}), \alpha_{i-n+1}(x_{i+1}) ...,\alpha_{n-1}(x_{2n-1})].}
\end{equation*}
\begin{definition}
  An {$n$-ary Hom-Nambu} algebra is a triple $(\mathcal{N}, [\  ,..., \ ],  \widetilde{\alpha} )$ is called $n$-Hom-Lie algebra if the bracket $[\  ,..., \ ]$ is skewsymmetric i.e $[x_{\sigma(1)},\cdots,x_{\sigma(n)}]=(-1)^{sign(\sigma)}[x_{1},\cdots,x_{n}]$ for $\sigma\in S_n$.
\end{definition}
\begin{remark}
When the maps $(\alpha_i)_{1\leq i\leq n-1}$ are all identity maps, one recovers the classical $n$-ary Nambu algebras. The Hom-Nambu identity \eqref{NambuIdentity}, for $n=2$,  corresponds to Hom-Jacobi identity (see \cite{Makhlouf&Silvestrov1}), which reduces to Jacobi identity when $\alpha_1=id$.
\end{remark}

Let $(\mathcal{N},[\ ,\dots,\ ],\widetilde{\alpha})$ and
$(\mathcal{N}',[\cdot,\dots,\cdot]',\widetilde{\alpha}')$ be two $n$-ary Hom-Nambu
algebras  where $\widetilde{\alpha}=(\alpha_{i})_{i=1,\cdots,n-1}$ and
$\widetilde{\alpha}'=(\alpha'_{i})_{i=1,\cdots,n-1}$. A linear map $f:
\mathcal{N}\rightarrow \mathcal{N}'$ is an  $n$-ary Hom-Nambu algebras \emph{morphism}  if it satisfies
\begin{eqnarray*}f ([x_{1},\cdots,x_{2n-1}])&=&
[f (x_{1}),\cdots,f (x_{2n-1})]'\\
f \circ \alpha_i&=&\alpha'_i\circ f \quad \forall i=1,n-1.
\end{eqnarray*}

\hspace{0.5cm}In the sequel we deal sometimes with a particular class of $n$-ary Hom-Nambu algebras which we call $n$-ary multiplicative Hom-Nambu  algebras.

\begin{definition}
A \emph{multiplicative $n$-ary Hom-Nambu algebra }
(resp. \emph{ multiplicative $n$-Hom-Lie algebra}) is an $n$-ary Hom-Nambu algebra  (resp. $n$-Hom-Lie algebra) $(\mathcal{N}, [\  ,..., \ ],  \widetilde{ \alpha})$ with  $\widetilde{\alpha}=(\alpha_i)_{1\leq i\leq n-1}$
where  $\alpha_1=...=\alpha_{n-1}=\alpha$  and satisfying
\begin{equation}
\alpha([x_1,..,x_n])=[\alpha(x_1),..,\alpha(x_n)],\ \  \forall x_1,...,x_n\in \mathcal{N}.
\end{equation}
For simplicity, we will denote the $n$-ary multiplicative Hom-Nambu algebra as $(\mathcal{N}, [\  ,..., \  ],  \alpha)$ where $\alpha :\mathcal{N}\rightarrow \mathcal{N}$ is a linear map. Also by misuse of language an element  $x\in \mathcal{N}^n$ refers to  $x=(x_1,..,x_{n})$ and  $\alpha(x)$ denotes $(\alpha (x_1),\dots,\alpha (x_n))$.
\end{definition}
\subsection{Derivations, Quasiderivations and centroids of multiplicative $n$-Hom-Lie algebras}

\

In this section we recall the definition of derivation, generalized derivation, quasiderivation and centroids of multiplicative $n$-Hom-Lie algebras.\\
Let $(\mathcal{N}, [\   ,\cdots, \  ],  \alpha )$ be an  multiplicative $n$-Hom-Lie algebra. We
denote by $\alpha^k$ the $k$-times composition of $\alpha$
(i.e.  $\alpha^k=\alpha\circ...\circ\alpha$ $k$-times).
In particular $\alpha^{-1}=0$, $\alpha^0=id$.

\begin{definition}
For any $k\geq1$, we call $D\in End(\mathcal{N})$ an $\alpha^k$-\emph{derivation }of the
 multiplicative $n$-Hom-Lie algebra $(\mathcal{N}, [\  ,...,\ ],  \alpha )$ if
\begin{equation}\label{alphaKderiv1}[D,\alpha]=0\ \ \textrm{i.e.}\ \ D\circ\alpha=\alpha\circ D,\end{equation}
and
\begin{equation}\label{alphaKderiv2}
D[x_1,...,x_n]=\sum_{i=1}^n[\alpha^k(x_1),...,\alpha^k(x_{i-1}),D(x_i),\alpha^k(x_{i+1}),...,\alpha^k(x_n)],
\end{equation}
We denote by $Der_{\alpha^k}(\mathcal{N})$ the set of $\alpha^k$-derivations of
the  multiplicative $n$-Hom-Lie  algebra $\mathcal{N}$.
\end{definition}

For $X=(x_1,...,x_{n-1})\in \mathcal{N}^{ n-1}$ satisfying $\alpha(X)=X$ and $k\geq 1$,
we define the map $\text{ad}^k_X\in End(\mathcal{N})$ by
\begin{equation}\label{ad_k(u)}
\text{ad}^k_X(y)=[x_1,...,x_{n-1},\alpha^k(y)]\ \ \forall y\in \mathcal{N}.
\end{equation}
Then

\begin{lem}
The map $\text{ad}^k_X$ is an $\alpha^{k+1}$-derivation, that we call inner $\alpha^{k+1}$-derivation.
\end{lem}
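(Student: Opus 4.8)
The plan is to verify directly the two defining properties of an $\alpha^{k+1}$-derivation, namely the commutation relation \eqref{alphaKderiv1} with $\alpha$ and the twisted Leibniz rule \eqref{alphaKderiv2} with $k$ replaced by $k+1$. Throughout, the two tools I expect to use are the multiplicativity of $\alpha$, that is $\alpha([z_1,\dots,z_n])=[\alpha(z_1),\dots,\alpha(z_n)]$, and the hypothesis $\alpha(X)=X$, which means $\alpha(x_j)=x_j$ for each $j=1,\dots,n-1$.

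First I would check that $\text{ad}^k_X$ commutes with $\alpha$. On one side, $\text{ad}^k_X(\alpha(y))=[x_1,\dots,x_{n-1},\alpha^{k+1}(y)]$ directly from the definition \eqref{ad_k(u)}. On the other side, using multiplicativity, $\alpha(\text{ad}^k_X(y))=[\alpha(x_1),\dots,\alpha(x_{n-1}),\alpha^{k+1}(y)]$, and since $\alpha(x_j)=x_j$ this coincides with the same expression. Hence $\text{ad}^k_X\circ\alpha=\alpha\circ\text{ad}^k_X$, which is exactly \eqref{alphaKderiv1}.

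The core of the argument is the Leibniz rule. Starting from $\text{ad}^k_X[y_1,\dots,y_n]=[x_1,\dots,x_{n-1},\alpha^k([y_1,\dots,y_n])]$, I would first use multiplicativity to rewrite $\alpha^k([y_1,\dots,y_n])=[\alpha^k(y_1),\dots,\alpha^k(y_n)]$, and then use $x_j=\alpha(x_j)$ to present the outer bracket as $[\alpha(x_1),\dots,\alpha(x_{n-1}),[\alpha^k(y_1),\dots,\alpha^k(y_n)]]$. This is precisely the left-hand side of the Hom-Nambu identity \eqref{NambuIdentity} evaluated with the arguments $\alpha^k(y_1),\dots,\alpha^k(y_n)$ in the inner slots. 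Applying \eqref{NambuIdentity} produces the sum $\sum_{i=1}^n[\alpha^{k+1}(y_1),\dots,[x_1,\dots,x_{n-1},\alpha^k(y_i)],\dots,\alpha^{k+1}(y_n)]$, where each untouched entry has absorbed one more power of $\alpha$. Recognizing the inner bracket $[x_1,\dots,x_{n-1},\alpha^k(y_i)]$ as $\text{ad}^k_X(y_i)$ yields exactly \eqref{alphaKderiv2} with $k$ replaced by $k+1$, as required.

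The computation is conceptually routine, and the only point demanding care is the bookkeeping of the twist powers: one must check that the single $\alpha$ supplied by the Hom-Nambu identity combines with the $\alpha^k$ already present on each $y_j$ to give exactly $\alpha^{k+1}(y_j)$ in every unperturbed slot, while the twist on the distinguished argument $y_i$ remains at level $\alpha^k$ inside the inner bracket so that it matches $\text{ad}^k_X(y_i)$. This index and power matching is the only place where an error could plausibly creep in.
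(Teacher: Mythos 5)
Your proof is correct: both the commutation with $\alpha$ and the twisted Leibniz rule at level $k+1$ follow exactly as you describe from multiplicativity, the hypothesis $\alpha(X)=X$, and one application of the Hom-Nambu identity to the arguments $\alpha^k(y_1),\dots,\alpha^k(y_n)$, with the power bookkeeping working out as claimed. The paper states this lemma without proof, and your argument is precisely the standard verification one would expect there.
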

We denote by $Inn_{\alpha^k}(\mathcal{N})$ the space generate by all the inner $\alpha^{k+1}$-derivations.
For any $D\in Der_{\alpha^k}(\mathcal{N})$ and $D'\in Der_{\alpha^k}(\mathcal{N})$ we define their commutator $[D,D']$ as usual:
\begin{equation}\label{DerivationsCommutator}[D,D']=D\circ D'-D'\circ D.\end{equation}
Set $Der(\mathcal{N})=\dl\bigoplus_{k\geq -1}Der_{\alpha^k}(\mathcal{N})$ and $Inn(\mathcal{N})=\dl\bigoplus_{k\geq -1}Inn_{\alpha^k}(\mathcal{N})$.
\begin{definition} An endomorphism $D$ of a multiplicative n-ary Hom-Nambu algebra $(\mathcal N, [~,\cdots,~], \alpha)$  is called a generalized $\alpha^k$-derivation   if there
exist linear mappings $D',D'', \cdots ,D^{(n-1)},D^{(n)} \in End(\mathcal{N}) $ such that
\begin{equation}\label{genereliseDerivetin}
D^{(n)}([x_1, \cdots , x_n])=\sum_{i=1}^n[\alpha^k(x_1), \cdots ,D^{(i-1)}(x_i), \cdots, \alpha^k(x_n)],
\end{equation}
for all $x_1,\cdots , x_n\in\mathcal{N}$. An $(n + 1)$-tuple $(D,D',D'', \cdots ,D^{(n-1)},D^{(n)})$ is called an $(n + 1)$-ary
$\alpha^k$-derivation.
\end{definition}
The set of generalized $\alpha^k$-derivation is denoted by $GDer_{\alpha^k}(\mathcal{N})$. Set  $GDer(\mathcal{N})=\dl\bigoplus_{k\geq -1}GDer_{\alpha^k}(\mathcal{N})$.
\begin{definition}
  Let $(\mathcal N, [~,\cdots,~], \alpha)$ be a multiplicative n-ary Hom-Nambu algebra and $End(\mathcal N)$ be the
endomorphism algebra of $\mathcal N$. An endomorphism $D\in End(\mathcal N)$ is said to be an $\alpha^k-$quasiderivation, if
there exists  an endomorphism $D'\in End(\mathcal N)$ such that
$$\displaystyle\sum_{i=1}^n[\alpha^k(x_1),\dots,D(x_i),\dots, \alpha^k(x_n)]=D'([x_1,\dots, x_n]),$$
for all $x_1,\dots,x_n\in \mathcal N$.
We  call $D'$ the endomorphism associate to the  $\alpha^k-$quasiderivation $D$.
\end{definition}

The set of $\alpha^k$-quasiderivations will be denoted by  $QDer_{\alpha^k}(\mathcal N)$. Set $QDer(\mathcal N)=\dl\bigoplus_{k\geq -1}QDer_{\alpha^k}(\mathcal N)$.

\begin{definition}
  Let $(\mathcal N, [~,\cdots,~], \alpha)$ be a multiplicative n-ary Hom-Nambu algebra and $End(\mathcal N)$ be the
endomorphism algebra of $\mathcal N$. Then the following subalgebra of $End(\mathcal N)$
$$
 Cent(\mathcal N) = \{\theta\in End(N) : \theta([x_1,\dots, x_n])= [\theta(x_1),\dots, x_n], ~~\forall x_i\in\mathcal N\}$$
is said to be the centroid of the n-ary Hom-Nambu algebra.
The definition is the same for classical case of n-ary Nambu algebra. We may also consider the same
definition for any n-ary Hom-Nambu algebra.
\end{definition}
Now, let $(\mathcal N, [~,\cdots,~], \alpha)$ be a multiplicative n-ary Hom-Nambu algebra.
\begin{definition} An $\alpha^k$-centroid of a multiplicative n-ary Hom-Nambu algebra $(\mathcal N, [~,\cdots,~], \alpha)$ is a subalgebra
of $End(\mathcal N)$ denoted $Cent_{\alpha^k}(\mathcal N)$, given by
$$
 Cent_{\alpha^k} (\mathcal N)=\{\theta\in End(\mathcal N): \theta[x_1,\cdots, x_n]=[\theta(x_1), \alpha^k(x_2),\dots, \alpha^k(x_n)], \forall x_i\in\mathcal N\}.$$
We recover the definition of the centroid when $k=0$.
\end{definition}
If $\mathcal N$ is a multiplicative n-Hom-Lie algebra, then it is a simple fact that
$$\theta[x_1,\dots, x_n]=[\alpha^k(x_1),\dots, \theta(x_p),\dots, \alpha^k(x_n)], \forall p \in\{1,\dots, n\}.$$
\section{$n$-Hom-Lie algebras induced by Hom-Lie algebras }
\hspace{0.5cm}In \cite{Arnlind&kitoni&makhlouf&Silvi} and \cite{Arnlind&kitoni&makhlouf}, the authors introduced a construction of a $3$-Hom-Lie algebra from a Hom-Lie
algebra, and more generally of an $(n+1)$-Hom-Lie algebra from an $n$-Hom-Lie algebra. It is
called $(n + 1)$-Hom-Lie algebra induced by $n$-Hom-Lie algebra.
In this context,  Abramov give a new approach of this construction (see  \cite{Abramov2018}). Now we generalise this approach in the Hom case.

\

\hspace{0.5cm}Let  $ (\mathfrak{g},[~,~],\alpha)$ a multiplicative Hom-Lie algebra and $ \mathfrak{g}^*$ be its dual space. Fix an
element of the dual space $\varphi\in \mathfrak{g}^*$. Define the triple product
as follows
\begin{equation}\label{TripleProduct}
[x,y,z]=\varphi(x)[y,z]+\varphi(y)[z,x]+\varphi(z)[x,y],\forall\ x,\ y,\ z\in \mathfrak{g}. \end{equation}
Obviously this triple product is   skew-symmetric. Straightforward computation of the left hand side and the
right hand side of the Filippov-Jacobi identity \eqref{NambuIdentity} if $\varphi\circ\alpha=\varphi$
and
\begin{equation}\label{ConditionTrace}
 \varphi(x)\varphi([y,z])+\varphi(y)\varphi([z,x])+\varphi(z)\varphi([x,y])=0.
\end{equation}
Now we consider $\varphi$ as a $\mathbb{K}$-valued cochain of degree one of the Chevalley-Eilenberg
complex of a Lie algebra $\mathfrak g$. Making use of the coboundary operator $\delta:\wedge^{k}\mathfrak{g}^*\rightarrow\wedge^{k}\mathfrak{g}^*$ defined by
 \begin{equation}\label{OperatorCobord}
    \delta f(u_1,\cdots,u_{k+1})=\sum_{i<j}(-1)^{i+j+1}f([u_i,u_j]_{\mathfrak g},\alpha(u_1)\cdots,\widehat{u_i},\cdots,\widehat{u_j},\cdots,\alpha(u_{k+1})),
 \end{equation}
  for $f\in\wedge^{k}\mathfrak{g}^*$  and for all $ u_1,\cdots,u_{k+1}\in \mathfrak{g}$.
 We obtain that $\delta\varphi(x, y) = \varphi([x, y])$. \\ Finally
we can define  the wedge product of two cochains $\varphi$ and $\delta\varphi$, which is the cochain of
degree three by
$$\varphi\wedge\delta\varphi(x, y, z) =\varphi(x)\varphi([y,z])+\varphi(y)\varphi([z,x])+\varphi(z)\varphi([x,y]).$$
Hence \eqref{ConditionTrace} is equivalent to $\varphi\wedge\delta\varphi=0$.
Thus if an $1$-cochain $\varphi$ satisfies the equation \eqref{ConditionTrace} then the triple product \eqref{TripleProduct} is the
ternary Lie bracket and  we will call this multiplicative 3-Hom-Lie bracket
the quantum Nambu bracket induced by a $1$-cochain.
\begin{definition}
Let $\phi\in \wedge^{n-2}\mathfrak{g}^*$, we define the $n$-ary product as follows
\begin{equation}\label{nProduct}
[x_1,\cdots,x_n]_\phi=\sum_{i<j}^{n}(-1)^{i+j+1}\phi(x_1,\cdots,\hat{x_i},\cdots,\hat{x_j},\cdots,x_n)[x_i,x_j],
\end{equation}
for all $x_1,\cdots,x_n\in\mathfrak{g}$.
\end{definition}
Then, we obtain.
\begin{proposition}
  The $n$-ary product $[~,\cdots,~ ]_\phi$ is skew-symetric.
\end{proposition}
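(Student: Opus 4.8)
The plan is to prove skew-symmetry by reducing to the generators of the symmetric group. Since the adjacent transpositions $\tau_k=(k,\,k+1)$, $1\le k\le n-1$, generate $S_n$, and since the sign of a permutation is multiplicative, it suffices to show that interchanging two consecutive arguments $x_k$ and $x_{k+1}$ reverses the sign of $[x_1,\dots,x_n]_\phi$. Once this is established for every $k$, the relation $[x_{\sigma(1)},\dots,x_{\sigma(n)}]_\phi=(-1)^{\mathrm{sign}(\sigma)}[x_1,\dots,x_n]_\phi$ follows by writing $\sigma$ as a product of adjacent transpositions and iterating. The two structural ingredients I would exploit are that $\phi\in\wedge^{n-2}\mathfrak g^*$ is an alternating form and that the Hom-Lie bracket $[\,,\,]$ is skew-symmetric.

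To carry this out I would fix $k$ and split the defining sum in \eqref{nProduct} into three groups according to the intersection of the index pair $\{i,j\}$ with $\{k,k+1\}$. $(i)$ If $\{i,j\}\cap\{k,k+1\}=\emptyset$, then both $x_k$ and $x_{k+1}$ survive as arguments of $\phi$, and since the only deleted positions $i,j$ are different from $k,k+1$ these two entries stay adjacent in the tuple fed to $\phi$, while the factor $[x_i,x_j]$ is untouched. Swapping $x_k\leftrightarrow x_{k+1}$ is therefore an adjacent transposition of the arguments of the alternating form $\phi$, which contributes a factor $-1$, so every such term changes sign. $(ii)$ If $\{i,j\}=\{k,k+1\}$ there is a single term; here $\phi$ depends on neither $x_k$ nor $x_{k+1}$ (both are deleted), so its value is unaffected, whereas the bracket becomes $[x_{k+1},x_k]=-[x_k,x_{k+1}]$. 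This term again changes sign.

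The remaining terms, case $(iii)$, are those in which exactly one of $i,j$ lies in $\{k,k+1\}$, and these I would handle by pairing: for each index $s\notin\{k,k+1\}$, pair the term indexed by $\{k,s\}$ with the term indexed by $\{k+1,s\}$. Under the swap these two terms are carried into one another up to sign. The key observation is that the surviving argument list of $\phi$ for $\{k,s\}$ \emph{after} the swap coincides, as an ordered tuple, with the surviving argument list of $\phi$ for $\{k+1,s\}$ \emph{before} the swap: deleting position $k$ while keeping the value now sitting at position $k+1$ produces exactly the same tuple as deleting position $k+1$ while keeping the value at position $k$. Since moreover $[x_{k+1},x_s]$ and $[x_k,x_s]$ are the brackets that get interchanged, and the prefactors $(-1)^{i+j+1}$ of the two members of the pair differ by exactly one in the exponent, the sum of each such pair reverses sign.

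The hard part will be the bookkeeping in step $(iii)$: one must verify that the shift in position of the surviving repeated index and the change in the sign exponent $(-1)^{i+j+1}$ combine to give precisely one overall factor $-1$, and this must be checked for both relative orderings $s<k$ and $s>k+1$. Steps $(i)$ and $(ii)$ are immediate from the alternating property of $\phi$ and the skew-symmetry of $[\,,\,]$, so the entire content of the argument is concentrated in the sign analysis of $(iii)$. Summing the contributions of the three groups then shows that $[x_1,\dots,x_n]_\phi$ changes sign under each $\tau_k$, which by the reduction above gives full skew-symmetry. (Alternatively, since $\mathbb K$ has characteristic zero, one could instead prove the equivalent alternating property, showing $[x_1,\dots,x_n]_\phi=0$ whenever $x_p=x_q$; there the terms with $\{i,j\}\cap\{p,q\}=\emptyset$ vanish because $\phi$ repeats an argument, the term with $\{i,j\}=\{p,q\}$ vanishes because $[v,v]=0$, and the mixed terms cancel in pairs by the same positional matching, so the obstacle is the same sign computation.)
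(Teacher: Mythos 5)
Your proposal is correct, and its core is the same as the paper's: decompose the defining sum \eqref{nProduct} according to how the deleted index pair $\{i,j\}$ meets the transposed pair, then use the alternating property of $\phi$ for the disjoint terms, the skew-symmetry of $[\,,\,]$ for the term where the pairs coincide, and a pairing-with-sign argument for the mixed terms. The one genuine difference is that you first reduce to the adjacent transpositions $\tau_k=(k,k+1)$, whereas the paper checks an arbitrary transposition $i<j$ directly and consequently has to split the sum into six families (with separate subcases for $l<i$, $i<l<j$, $l>j$) whose final sign cancellation it essentially just asserts. Your reduction buys a cleaner mixed case: after swapping $x_k\leftrightarrow x_{k+1}$, the surviving argument tuple of $\phi$ for the pair $\{k,s\}$ literally coincides with the tuple for $\{k+1,s\}$ before the swap, and the prefactors $(-1)^{k+s+1}$ and $(-1)^{(k+1)+s+1}$ differ by exactly one factor of $-1$, so each pair of mixed terms visibly reverses sign; I checked that this goes through for both $s<k$ and $s>k+1$. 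The price is the extra (standard) step of invoking that adjacent transpositions generate $S_n$ and that the sign is multiplicative. Your parenthetical alternative via the alternating property $x_p=x_q\Rightarrow[x_1,\dots,x_n]_\phi=0$ is also valid in characteristic zero and faces the same mixed-term cancellation.
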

\begin{proof}
  Let $x_1,\cdots,x_n\in \mathfrak g$ and fixed two integer  $i<j$, we have:
  \begin{align*}
  [x_1,\cdots,x_i,\cdots,x_j,\cdots,x_n]_{\phi}=&\displaystyle\sum_{k<l:k,l\neq i,j}(-1)^{k+l+1}\phi(x_1,\cdots,x_i\cdots,\widehat{x}_k,\cdots,x_j,\cdots,\widehat{x}_l,\cdots,x_n)[x_l,x_k]\\
  &+\displaystyle\sum_{i<l\neq j} (-1)^{i+l+1}\phi(x_1,\cdots,\widehat{x}_i\cdots,x_j,\cdots,\widehat{x}_l,\cdots,x_n)[x_i,x_l]\\
  &+\displaystyle\sum_{l< i} (-1)^{i+l+1}\phi(x_1,\cdots,\widehat{x}_l,\cdots,\widehat{x}_i,\cdots,x_j,\cdots,x_n)[x_l,x_i]\\ &+\displaystyle\sum_{j<l} (-1)^{j+l+1}\phi(x_1,\cdots,x_i\cdots,\widehat{x}_j,\cdots,\widehat{x}_l,\cdots,x_n)[x_j,x_l]\\
  &+\displaystyle\sum_{l<j, i\neq l} (-1)^{j+l+1}\phi(x_1,\cdots,x_i,\cdots,\widehat{x}_l,\cdots,\widehat{x}_j,\cdots,x_n)[x_l,x_j]\\
  &+ (-1)^{i+j+1}\phi(x_1,\cdots,\widehat{x}_i,\cdots,\widehat{x}_j,\cdots,x_n)[x_i,x_j]\\
  &=-[x_1,\cdots,x_j,\cdots,x_i,\cdots,x_n]_{\phi}.
  \end{align*}
\end{proof}
Given $X=(x_1,\cdots,x_{n-3})\in \wedge^{n-3}\mathfrak g$, $Y=(y_1,\cdots,y_{n})\in \wedge^{n}\mathfrak g$ and $z\in \mathfrak g$. We define the linear map $\phi_X$ by:
 $$\phi_X(z)=\phi(X,z),$$ and
\begin{align*}
\phi\wedge\delta\phi_X(Y)&=\sum_{i<j}^{n}(-1)^{i+j}\phi(y_1,\cdots \hat{y_i}\cdots\hat{y_j}\cdots,y_{n-1})\delta\phi_X(y_i,y_j)\\
&=\sum_{i<j}^{n}(-1)^{i+j}\phi(y_1,\cdots \hat{y_i}\cdots\hat{y_j}\cdots,y_{n})\phi_X([y_i,y_j]).\end{align*}
\begin{thm}
  Let $(\mathfrak g,[~,~],\alpha)$ be a  multiplicative Hom-Lie algebra, $\mathfrak g^*$ be its dual and $\phi$ be
a cochain of degree $n-2$, i.e. $\phi\in\wedge^{n-2}\mathfrak g^*$. The vector space   $\mathfrak g$
equipped with the n-ary product \eqref{nProduct} and the linear map $\alpha$ is a multiplicative n-Hom-Lie algebra if and only if
\begin{align}\label{NHomLieProduct}
 & \phi\wedge\delta\phi_X=0,\ \forall X\in \wedge^{n-3}\mathfrak{g},\\
  & \phi\circ(\alpha\otimes Id\otimes\cdots\otimes Id)=\phi.
\end{align}
\end{thm}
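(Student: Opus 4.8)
The plan is to reduce the statement to two requirements beyond the skew-symmetry already verified in the preceding Proposition, namely the multiplicativity $\alpha([x_1,\cdots,x_n]_\phi)=[\alpha(x_1),\cdots,\alpha(x_n)]_\phi$ and the Hom-Nambu identity \eqref{NambuIdentity} for the product \eqref{nProduct}. I will argue that the second displayed condition $\phi\circ(\alpha\otimes Id\otimes\cdots\otimes Id)=\phi$ governs the multiplicativity and is simultaneously used to align coefficients inside the Nambu identity, while the first condition $\phi\wedge\delta\phi_X=0$ is exactly what remains to force \eqref{NambuIdentity}.

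First I would dispose of multiplicativity. Expanding $\alpha([x_1,\cdots,x_n]_\phi)$ through \eqref{nProduct} and using $\alpha([x_i,x_j])=[\alpha(x_i),\alpha(x_j)]$, the two sides of the desired equality carry the same binary brackets $[\alpha(x_i),\alpha(x_j)]$ and differ only in whether $\phi$ is evaluated on the original tuple or on its $\alpha$-image. Because $\phi\in\wedge^{n-2}\mathfrak g^*$ is antisymmetric, the invariance $\phi\circ(\alpha\otimes Id\otimes\cdots\otimes Id)=\phi$ in the first slot transports (up to sign) to every slot, and iterating it over the slots yields invariance of $\phi$ under the simultaneous twist of all entries; this gives multiplicativity. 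Conversely, specializing the entries $x_i$ shows the slot-wise invariance is necessary, so this half is an equivalence.

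The core is the Hom-Nambu identity. I would expand the left-hand side $[\alpha(x_1),\cdots,\alpha(x_{n-1}),W]_\phi$ with $W=[y_1,\cdots,y_n]_\phi$ by \eqref{nProduct}, splitting the sum according to the position of $W$. When $W$ lies inside the scalar factor $\phi$, linearity of $\phi$ together with the relation $\delta\phi_X(a,b)=\phi_X([a,b])$ turns the corresponding block into $\sum_{p<q}(\pm)\,[\alpha(x_p),\alpha(x_q)]\,(\phi\wedge\delta\phi_X)(y_1,\cdots,y_n)$, where $X$ is the $(n-3)$-tuple of the remaining $x$'s; these terms vanish precisely under $\phi\wedge\delta\phi_X=0$. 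When instead $W$ occupies one of the two active bracket slots, the terms have the shape $[\alpha(x_p),[y_k,y_l]]$, and I would rewrite them by the Hom-Jacobi condition $\circlearrowleft_{a,b,c}[\alpha(a),[b,c]]=0$ into combinations of $[\alpha(y_k),[x_p,y_l]]$. These are exactly the double brackets produced by expanding the right-hand side $\sum_i[\alpha(y_1),\cdots,[x_1,\cdots,x_{n-1},y_i]_\phi,\cdots,\alpha(y_n)]_\phi$; after untwisting the $\phi$-coefficients by the invariance established above and reindexing, they cancel term by term against the right-hand side, leaving only the $\phi\wedge\delta\phi_X$ blocks.

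The main obstacle is the combinatorial and sign bookkeeping in this matching: one must track the signs $(-1)^{i+j+1}$, the two omitted hatted indices in each $\phi$, and the shifts of position caused by inserting $W$ into the last slot and $Z_i=[x_1,\cdots,x_{n-1},y_i]_\phi$ into the $i$-th slot of the outer product. I would organize all contributions into the three classes \emph{both active slots among the $x$'s}, \emph{one active slot a $y$}, and \emph{$W$ (resp. $Z_i$) inside $\phi$}, which keeps the cancellations transparent and isolates the residue as a sum of $\phi\wedge\delta\phi_X$ evaluations, giving $\mathrm{LHS}-\mathrm{RHS}=\sum_X(\pm)\,(\phi\wedge\delta\phi_X)(\cdots)\,[\,\cdot\,,\,\cdot\,]$; whence \eqref{NHomLieProduct} is sufficient. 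For necessity I would run the computation backwards: since \eqref{NambuIdentity} is demanded for all arguments, I set $n-3$ of the $x_i$ equal to the entries of a prescribed $X$ and choose the remaining two so that a single bracket $[\alpha(x_p),\alpha(x_q)]$ survives, which forces $(\phi\wedge\delta\phi_X)(y_1,\cdots,y_n)=0$ for arbitrary $y$'s and arbitrary $X$, completing the equivalence.
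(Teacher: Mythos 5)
Your proposal follows essentially the same route as the paper's proof: multiplicativity is checked directly from $\phi\circ(\alpha\otimes Id\otimes\cdots\otimes Id)=\phi$ together with $\alpha([x_i,x_j])=[\alpha(x_i),\alpha(x_j)]$, and the Hom-Nambu identity is verified by expanding both sides, cancelling the double-bracket terms $[\alpha(x_k),[y_i,y_j]]$ via the Hom-Jacobi identity against the right-hand side, and grouping the remaining terms by their coefficient $[\alpha(x_k),\alpha(x_l)]$ so that the residual obstruction is exactly $\phi\wedge\delta\phi_X=0$. Your classification of terms and the necessity argument are, if anything, slightly more explicit than the paper's, but the decomposition and the key use of Hom-Jacobi are identical.
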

\begin{proof}
Firstly, let $(x_1,\cdots,x_n)\in  \wedge^{ n}\mathfrak g$. We have:
\begin{align*}
[\alpha(x_1),\cdots,\alpha(x_n)]_{\phi}&=\sum_{i<j}^{n}(-1)^{i+j+1}\phi(\alpha(x_1),
\cdots,\hat{\alpha(x_i)},\cdots,\hat{\alpha(x_j)},\cdots,\alpha(x_n))[\alpha(x_i),\alpha(x_j)]\\
&=\sum_{i<j}^{n}(-1)^{i+j+1}\phi(x_1,
\cdots,\hat{x_i},\cdots,\hat{x_j},\cdots,x_n)\alpha([x_i,x_j])\\&=\alpha([x_1,\cdots,x_n]_{\phi}).
\end{align*}
  Secondly, for $(x_1,\cdots,x_{n-1})\in \wedge^{ n-1}\mathfrak g$ and $(y_1,\cdots,y_n)\in \wedge^{ n}\mathfrak g$, we have
\begin{align*}
   &[\alpha(x_1),\cdots,\alpha(x_{n-1}),[y_1,\cdots,y_n]_{\phi}]_{\phi}\\&=\displaystyle\sum_{i<j}(-1)^{i+j+1} \phi(y_1,\cdots,\widehat{y}_i,\cdots,\widehat{y}_j,\cdots,y_n)[\alpha(x_1),\cdots,\alpha(x_{n-1}),[y_i,y_j]]_{\phi}\\
   &=\displaystyle\sum_{i<j}\displaystyle\sum_{k<l\leq n-1}(-1)^{i+j+k+l}
   \phi(\alpha(x_1),\cdots,\widehat{\alpha(x_k)},\cdots,\widehat{\alpha(x_l)},\cdots,[y_i,y_j])
   \phi(y_1,\cdots,\widehat{y}_i,\cdots,\widehat{y}_j,\cdots,y_n)[\alpha(x_k),\alpha(x_l)]\\
   &+\displaystyle\sum_{i<j}\displaystyle\sum_{k<n}(-1)^{i+j+k}
   \phi(\alpha(x_1),\cdots,\widehat{\alpha(x_k)},\cdots,\alpha(x_{(n-1)}),\cdots,\widehat{[y_i,y_j]})
   \phi(y_1,\cdots,\widehat{y}_i,\cdots,\widehat{y}_j,\cdots,y_n)[\alpha(x_k),[y_i,y_j]].
 \end{align*}
 The terms $[\alpha(x_k),[y_i,y_j]]$ are simplified by identity of Jacobi in the second half of the Filippov identity. Now, we group together the other terms according to their coefficient $[\alpha(x_i),\alpha(x_j)]$. For example, if we fixed $(k, l)$ and, if we collect all the terms containing the commutator $[\alpha(x_k),\alpha(x_l)]$,  then we get the expression
 $$\Big(\displaystyle\sum_{i<j}(-1)^{i+j+k+l}
   \phi(\alpha(x_1),\cdots,\widehat{\alpha(x_k)},\cdots,\widehat{\alpha(x_l)},\cdots,[y_i,y_j])
   \phi(y_1,\cdots,\widehat{y}_i,\cdots,\widehat{y}_j,\cdots,y_n)\Big)[\alpha(x_k),\alpha(x_l)].$$

   Hence the $n$-ary product \eqref{nProduct} will satisfy the $n$-ary Filippov-Jacobi identity if for
any elements $X=(x_1,\cdots,x_{n-3})\in\wedge^{n-3}\mathfrak g$ and $Y=(y_1,\cdots,y_n)\in\wedge^n \mathfrak g$  we require
$$\Big(\displaystyle\sum_{i<j}^n(-1)^{i+j}
   \phi(\alpha(x_1),\cdots,\alpha(x_{n-3}),[y_i,y_j])
   \phi(y_1,\cdots,\widehat{y}_i,\cdots,\widehat{y}_j,\cdots,y_n)\Big)=0.$$
\end{proof}
\begin{definition}
  Let $\phi:\mathfrak g\otimes\cdots\otimes \mathfrak  g\rightarrow \mathbb{K}$ a skew-symmetric multilinear form of   the multiplicative  Hom-Lie algebras $(\mathfrak g,[~,~],\alpha)$, then $\phi$ is called trace if:
   $$\phi\circ(Id\otimes\cdots\otimes Id\otimes[~,~])=0~~~~~\text{and}~~ \phi\circ(\alpha\otimes Id\otimes\cdots\otimes Id)=\phi.
   ~$$
\end{definition}
\begin{cor}
  Let $\phi:\mathfrak g^{\otimes n-2}\rightarrow \mathbb{K}$ be a trace of  Hom-Lie algebra $(\mathfrak g,[~,~],\alpha)$, then $\mathfrak g_\phi=(\mathfrak g,[.,\cdots,.]_\phi,\alpha)$ is a $n$-Hom-Lie algebra.
\end{cor}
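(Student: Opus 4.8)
The plan is to deduce this corollary directly from the preceding Theorem, which provides a necessary and sufficient criterion for $(\mathfrak g,[\,\cdot,\cdots,\cdot\,]_\phi,\alpha)$ to be a multiplicative $n$-Hom-Lie algebra, namely the two conditions $\phi\wedge\delta\phi_X=0$ for all $X\in\wedge^{n-3}\mathfrak g$ and $\phi\circ(\alpha\otimes Id\otimes\cdots\otimes Id)=\phi$. Since $\phi$ is assumed to be a trace, it suffices to check that these two conditions are consequences of the two defining axioms of a trace.

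The second condition is immediate: by the very definition of a trace, the identity $\phi\circ(\alpha\otimes Id\otimes\cdots\otimes Id)=\phi$ is one of the two defining properties, so nothing needs to be proved there. This already secures the $\alpha$-compatibility, and the skew-symmetry of $[\,\cdot,\cdots,\cdot\,]_\phi$ is supplied by the earlier Proposition.

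For the first condition I would unwind the definition of $\phi\wedge\delta\phi_X$. Recall that for $X=(x_1,\cdots,x_{n-3})$ one sets $\phi_X(z)=\phi(x_1,\cdots,x_{n-3},z)$, and for $Y=(y_1,\cdots,y_n)$ one has
$$\phi\wedge\delta\phi_X(Y)=\sum_{i<j}^{n}(-1)^{i+j}\phi(y_1,\cdots,\widehat{y_i},\cdots,\widehat{y_j},\cdots,y_n)\,\phi_X([y_i,y_j]).$$
The key observation is that each factor $\phi_X([y_i,y_j])$ equals $\phi(x_1,\cdots,x_{n-3},[y_i,y_j])$, that is, $\phi$ evaluated with a bracket in its last argument. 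But the remaining defining property of a trace is exactly $\phi\circ(Id\otimes\cdots\otimes Id\otimes[\,\cdot,\cdot\,])=0$, which forces $\phi(x_1,\cdots,x_{n-3},[y_i,y_j])=0$ for every pair $i<j$. Hence every summand vanishes and $\phi\wedge\delta\phi_X(Y)=0$ for all $Y$, so $\phi\wedge\delta\phi_X=0$.

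With both hypotheses of the Theorem verified, I conclude that $\mathfrak g_\phi=(\mathfrak g,[\,\cdot,\cdots,\cdot\,]_\phi,\alpha)$ is a multiplicative $n$-Hom-Lie algebra. There is no genuine obstacle here, as all the substantive work lies in the Theorem and the corollary merely repackages the trace axioms into the Theorem's two conditions; the only point meriting (trivial) attention is matching the last-slot bracket in $\phi_X([y_i,y_j])$ with the trace condition, which the skew-symmetry of $\phi$ handles should one wish to place the bracket in any other slot.
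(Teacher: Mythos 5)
Your proof is correct and follows exactly the route the paper intends: the corollary is stated without proof as an immediate consequence of the preceding theorem, and you verify its two hypotheses directly from the two trace axioms (the $\alpha$-compatibility is literally the second axiom, and $\phi\wedge\delta\phi_X=0$ holds termwise because $\phi_X([y_i,y_j])=\phi(x_1,\dots,x_{n-3},[y_i,y_j])$ vanishes by the first axiom).
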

%
\begin{proposition}
Let $(\mathfrak g,[~,~],\alpha)$ be a Hom-Lie algebra and $D \in Der(\mathfrak{g})$ be an $\alpha^k$-derivation such that
$$\sum_{i=1}^{n-2}\phi(x_1,\cdots D(x_i),\cdots,x_{n-2})=0.$$
Then $D$ is  an $\alpha^k$-derivation of the $n$-Hom-Lie algebra $(\mathfrak g,[~,\cdots,~]_\phi,\alpha)$.
\end{proposition}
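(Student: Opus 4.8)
The plan is to check directly the two conditions that define an $\alpha^k$-derivation of the induced $n$-Hom-Lie algebra $\mathfrak g_\phi=(\mathfrak g,[~,\cdots,~]_\phi,\alpha)$. The commutation requirement $[D,\alpha]=0$ is free: $D$ is already an $\alpha^k$-derivation of $(\mathfrak g,[~,~],\alpha)$ and the twisting map $\alpha$ is unchanged in passing to $\mathfrak g_\phi$. Hence the entire content lies in the Leibniz-type identity
$$D[x_1,\cdots,x_n]_\phi=\sum_{p=1}^n[\alpha^k(x_1),\cdots,D(x_p),\cdots,\alpha^k(x_n)]_\phi.$$
I would begin by expanding the left-hand side. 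Since every $\phi(\cdots)$ is a scalar and $D$ is linear, applying $D$ to \eqref{nProduct} and using the binary rule $D[x_i,x_j]=[D(x_i),\alpha^k(x_j)]+[\alpha^k(x_i),D(x_j)]$ gives
$$D[x_1,\cdots,x_n]_\phi=\sum_{i<j}(-1)^{i+j+1}\phi(x_1,\cdots,\widehat{x_i},\cdots,\widehat{x_j},\cdots,x_n)\big([D(x_i),\alpha^k(x_j)]+[\alpha^k(x_i),D(x_j)]\big).$$

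Next I would expand the right-hand side via \eqref{nProduct}, and for each fixed $p$ split the inner pairs $i<j$ into three families: those with $i=p$, those with $j=p$, and those with $p\notin\{i,j\}$. A preliminary remark makes the recombination clean: the trace condition $\phi\circ(\alpha\otimes Id\otimes\cdots\otimes Id)=\phi$ together with skew-symmetry allows one to strip a factor $\alpha$ from \emph{any} single slot (move that slot to the front with a sign, strip, move back), whence $\phi(\alpha^k(u_1),\cdots,\alpha^k(u_{n-2}))=\phi(u_1,\cdots,u_{n-2})$. In the first two families $D(x_p)$ sits inside the bracket, producing $[D(x_p),\alpha^k(x_j)]$ (when $p=i$) or $[\alpha^k(x_i),D(x_p)]$ (when $p=j$), while $\phi$ carries only symbols $\alpha^k(x_m)$; summing over $p$ pairs these up so that each fixed $i<j$ contributes $(-1)^{i+j+1}\phi(\alpha^k(x_1),\cdots,\alpha^k(x_n))$ (with positions $i,j$ deleted) times $[D(x_i),\alpha^k(x_j)]+[\alpha^k(x_i),D(x_j)]$. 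Stripping the $\alpha^k$ from $\phi$ turns this into exactly the expanded left-hand side.

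The remaining family $p\notin\{i,j\}$ is the crux and is where the hypothesis on $D$ is used. For fixed $i<j$ the bracket is $[\alpha^k(x_i),\alpha^k(x_j)]$, with coefficient
$$(-1)^{i+j+1}\sum_{p\neq i,j}\phi\big(\alpha^k(x_1),\cdots,D(x_p),\cdots,\alpha^k(x_n)\big)_{i,j\text{ deleted}}.$$
Using the slot-wise $\alpha$-invariance to remove $\alpha^k$ from every slot except the one carrying $D$, this equals $(-1)^{i+j+1}\sum_{p\neq i,j}\phi(x_1,\cdots,D(x_p),\cdots,x_n)_{i,j\text{ deleted}}$, which is precisely $\sum_{m=1}^{n-2}\phi(z_1,\cdots,D(z_m),\cdots,z_{n-2})$ evaluated on the $n-2$ arguments obtained by deleting $x_i,x_j$. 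By the standing hypothesis on $D$ this vanishes, so the third family contributes nothing. Assembling the three families therefore reproduces $D[x_1,\cdots,x_n]_\phi$ term by term, which finishes the verification. The main obstacle I anticipate is purely organizational: keeping the index ranges, the deleted entries, and the signs consistent while splitting and recombining the double sums, together with the routine but repeated use of the $\alpha$-invariance of $\phi$ on individual slots.
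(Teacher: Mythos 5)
Your proof is correct and takes essentially the same route as the paper's: expand both sides of the Leibniz identity for $[~,\cdots,~]_\phi$, match the terms in which $D$ lands inside the binary bracket, and observe that the leftover terms, in which $D$ lands inside a slot of $\phi$, assemble for each fixed pair $(i,j)$ into exactly the sum $\sum_{l}\phi(\cdots,D(\cdot),\cdots)$ that the hypothesis kills. If anything you are more careful than the paper, since you state the argument in the forward logical direction and make explicit the repeated use of $\phi\circ(\alpha\otimes Id\otimes\cdots\otimes Id)=\phi$ together with skew-symmetry to strip the $\alpha^k$'s from the slots of $\phi$.
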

\begin{proof}
Let $X=(x_1,\cdots,x_n)\in\wedge^n \mathfrak g$, on the one hand we get

\begin{align*}
D([x_1,\cdots,x_n]_\phi)&= D\Big(\displaystyle\sum_{i<j}(-1)^{i+j+1}
\phi(\alpha(x_1),\cdots,\alpha(\widehat{x}_i),\cdots,\alpha(\widehat{x}_j),\cdots,\alpha(x_n))[\alpha(x_i),\alpha(x_j)]\Big)\\
&=\displaystyle\sum_{i<j}(-1)^{i+j+1}
\phi(\alpha(x_1),\cdots,\alpha(\widehat{x}_i),\cdots,\alpha(\widehat{x}_j),\cdots,\alpha(x_n))D([\alpha(x_i),\alpha(x_j)])\\
&=\displaystyle\sum_{i<j}(-1)^{i+j+1}
\phi(x_1,\cdots,\widehat{x}_i,\cdots,\widehat{x}_j,\cdots,x_n)[\alpha(D(x_i)),\alpha^{k+1}(x_j)]\\
&+\displaystyle\sum_{i<j}(-1)^{i+j+1}
\phi(x_1,\cdots,\widehat{x}_i,\cdots,\widehat{x}_j,\cdots,x_n)[\alpha^{k+1}(x_i),\alpha(D(x_j))],
\end{align*}
 and on the other hand, we have
 \begin{align*}
&\displaystyle\sum_{l=1}^n[\alpha^k(x_1),\cdots,\alpha^k(x_{l-1}),D(x_l),\cdots,\alpha^k(x_{l+1}),\cdots,\alpha^k(x_n)]_\phi \\&=
\displaystyle\sum_{l=1}^n\displaystyle\sum_{i<j\;;\;i,j\neq l}(-1)^{i+j+1}
\phi(\alpha^k(x_1),\cdots,\widehat{\alpha^k(x_i)},\cdots,D(x_l),\cdots,\widehat{\alpha^k(x_j)},\cdots,\alpha^k(x_n))[\alpha^k(x_i),\alpha^k(x_j)]\\
&+\displaystyle\sum_{l=1}^n\displaystyle\sum_{i<l}(-1)^{i+l+1}
\phi(\alpha^k(x_1),\cdots,\widehat{\alpha^k(x_i)},\cdots,\widehat{D(x_l)},\cdots,\alpha^k(x_n))[\alpha^k(x_i),D(x_l)]\\
&+\displaystyle\sum_{l=1}^n\displaystyle\sum_{l=i<j}(-1)^{j+l+1}
\phi(\alpha^k(x_1),\cdots,\widehat{D(x_l)},\cdots,\alpha^k(x_j),\cdots,\alpha^k(x_n))[D(x_l),\alpha^k(x_j)].
\end{align*}
If $D$ is an $\alpha^k$-derivation then $D([x_1,\cdots,x_n]_\phi)=\displaystyle\sum_{l=1}^n[\alpha^k(x_1),\cdots,\alpha^k(x_{l-1}),D(x_l),\cdots,\alpha^k(x_{l+1}),\cdots,\alpha^k(x_n)]_\phi$, which gives $$\displaystyle\sum_{i<j\;;\;i,j\neq l}(-1)^{i+j+1}\Big(\displaystyle\sum_{l=1}^n\displaystyle
\phi(\alpha^k(x_1),\cdots,\widehat{\alpha^k(x_i)},\cdots,D(x_l),\cdots,\widehat{\alpha^k(x_j)},\cdots,\alpha^k(x_n))\Big)[\alpha^k(x_i),\alpha^k(x_j)]=0.$$ Finally if we fixed $(i,j)$ we have
$$\displaystyle\sum_{l=1}^{n-2}\displaystyle
\phi(\alpha^k(x_1),\cdots,D(x_l),\cdots,\alpha^k(x_{n-2}))=0.$$

\end{proof}
\begin{proposition}

Let $(\mathfrak g,[~,~],\alpha)$ be a Hom-Lie algebra and $D\in QDer(\mathfrak g)$ be an $\alpha^k$-quasiderivation and $D':\mathfrak{g}\rightarrow\mathfrak{g}$ the endomorphism associate to $D$ such that
$$\sum_{i=1}^{n-2}\phi(x_1,\cdots D(x_i),\cdots,x_{n-2})=0.$$
Then $D$ is  an $\alpha^k$-quasiderivation of the $n$-Hom-Lie algebra $(\mathfrak g,[~,\cdots,~]_\phi,\alpha)$ with the same endomorphism associate $D'$.
\end{proposition}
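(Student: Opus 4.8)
The plan is to mirror the proof of the preceding proposition on $\alpha^k$-derivations, replacing the derivation relation by the quasiderivation relation at the binary level. Recall that $D$ being an $\alpha^k$-quasiderivation of $(\mathfrak g,[~,~],\alpha)$ with associate $D'$ means precisely
$$D'([x_i,x_j])=[D(x_i),\alpha^k(x_j)]+[\alpha^k(x_i),D(x_j)],\quad\forall x_i,x_j\in\mathfrak g.$$
What must be shown is that the same $D'$ witnesses that $D$ is an $\alpha^k$-quasiderivation of $(\mathfrak g,[~,\cdots,~]_\phi,\alpha)$, that is
$$\sum_{l=1}^n[\alpha^k(x_1),\cdots,D(x_l),\cdots,\alpha^k(x_n)]_\phi=D'([x_1,\cdots,x_n]_\phi).$$

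First I would expand the left-hand side by inserting the definition \eqref{nProduct} of the $n$-ary product into each of the $n$ summands, and then sort the resulting terms according to the position of the slot carrying $D(x_l)$ relative to the index pair $(i,j)$ produced by \eqref{nProduct}. This yields three families: (a) terms with $l\notin\{i,j\}$, in which $D(x_l)$ sits inside the form $\phi$ and the bracket is $[\alpha^k(x_i),\alpha^k(x_j)]$; (b) terms with $l=i$, giving a bracket $[D(x_i),\alpha^k(x_j)]$; and (c) terms with $l=j$, giving $[\alpha^k(x_i),D(x_j)]$.

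The key preliminary observation, used throughout, is that the condition $\phi\circ(\alpha\otimes Id\otimes\cdots\otimes Id)=\phi$ together with the skew-symmetry of $\phi$ forces $\phi$ to be invariant under applying $\alpha$ (hence $\alpha^k$) in any single argument; so every factor $\alpha^k(x_m)$ occurring inside $\phi$ may be replaced by $x_m$. Applying this to family (a) and then fixing a pair $(i,j)$, the coefficient of $[\alpha^k(x_i),\alpha^k(x_j)]$ becomes $\sum_{l\neq i,j}\phi(x_1,\cdots,\widehat{x}_i,\cdots,D(x_l),\cdots,\widehat{x}_j,\cdots,x_n)$, which is exactly a sum of the type appearing in the hypothesis over the $(n-2)$-tuple $(x_m)_{m\neq i,j}$ and therefore vanishes. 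Hence family (a) contributes nothing.

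For families (b) and (c) I would again strip the $\alpha^k$ from the arguments of $\phi$ and regroup by the pair $(i,j)$, so that the two contributions combine into
$$\sum_{i<j}(-1)^{i+j+1}\phi(x_1,\cdots,\widehat{x}_i,\cdots,\widehat{x}_j,\cdots,x_n)\big([D(x_i),\alpha^k(x_j)]+[\alpha^k(x_i),D(x_j)]\big).$$
Substituting the binary quasiderivation relation $[D(x_i),\alpha^k(x_j)]+[\alpha^k(x_i),D(x_j)]=D'([x_i,x_j])$ and pulling the linear map $D'$ outside the sum identifies this expression with $D'([x_1,\cdots,x_n]_\phi)$, as required. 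The main obstacle is purely organizational: keeping the index bookkeeping of the three families consistent and invoking the $\alpha$-invariance of $\phi$ legitimately in each slot. No new structural input beyond the invariance condition on $\phi$ and the binary quasiderivation identity is needed, which is precisely why the associate $D'$ can be taken unchanged.
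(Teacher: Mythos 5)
Your argument is correct and is exactly the route the paper intends: it mirrors, step for step, the paper's proof of the preceding proposition on $\alpha^k$-derivations (splitting the expansion of $\sum_l[\alpha^k(x_1),\cdots,D(x_l),\cdots,\alpha^k(x_n)]_\phi$ into the three families according to whether $D(x_l)$ lands inside $\phi$ or inside the binary bracket, killing the first family with the hypothesis $\sum_i\phi(x_1,\cdots,D(x_i),\cdots,x_{n-2})=0$, and absorbing the other two via the binary quasiderivation identity), the paper itself stating the quasiderivation version without proof. Your explicit justification that skew-symmetry plus $\phi\circ(\alpha\otimes Id\otimes\cdots\otimes Id)=\phi$ lets you strip $\alpha^k$ from every slot of $\phi$ is a point the paper uses silently, so nothing is missing.
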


\begin{proposition}

Let $(\mathfrak g,[~,~],\alpha)$ be a Hom-Lie algebra and $\theta:\mathfrak g\rightarrow\mathfrak g$ be an $\alpha^k$-centroid such that
$$\phi(\theta(x_1),\cdots x_i,\cdots,x_{n-2})[\alpha^k(x),y]=\phi(x_1,\cdots x_i,\cdots,x_{n-2})[\theta(x),y].$$
Then $D$ is  an $\alpha^k$-centroid on the $n$-Hom-Lie algebra $(\mathfrak g,[~,\cdots,~]_\phi,\alpha)$.
\end{proposition}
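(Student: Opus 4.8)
The plan is to verify directly the defining identity of the $\alpha^k$-centroid for the induced $n$-Hom-Lie algebra $\mathfrak g_\phi=(\mathfrak g,[~,\cdots,~]_\phi,\alpha)$, i.e. to show that $\theta$ (which plays the role of $D$ in the statement) satisfies
$$\theta([x_1,\cdots,x_n]_\phi)=[\theta(x_1),\alpha^k(x_2),\cdots,\alpha^k(x_n)]_\phi$$
for all $x_1,\cdots,x_n\in\mathfrak g$. First I would expand the left-hand side via \eqref{nProduct}; since each $\phi(\cdots)$ is a scalar and $\theta$ is linear, this collapses to $\sum_{i<j}(-1)^{i+j+1}\phi(x_1,\cdots,\widehat{x}_i,\cdots,\widehat{x}_j,\cdots,x_n)\,\theta([x_i,x_j])$, and then I would invoke the binary $\alpha^k$-centroid relation $\theta([x_i,x_j])=[\theta(x_i),\alpha^k(x_j)]$ that defines $\theta$ on $\mathfrak g$ to rewrite each inner bracket.

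Before comparing with the right-hand side I would record one auxiliary fact: because $\phi$ is skew-symmetric and satisfies $\phi\circ(\alpha\otimes Id\otimes\cdots\otimes Id)=\phi$, it is invariant under $\alpha$ applied in \emph{any} single slot (swap that slot to the front by skew-symmetry, apply the hypothesis, swap back), whence $\phi(\alpha^k(u_1),\cdots,\alpha^k(u_{n-2}))=\phi(u_1,\cdots,u_{n-2})$. This lets me strip every $\alpha^k$ off the arguments of $\phi$. Next I would expand the right-hand side by \eqref{nProduct} and split the sum $\sum_{i<j}$ into the block $i=1$ and the block $2\le i<j$.

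For the block $i=1$ the factor $\theta(x_1)$ sits inside the bracket $[\theta(x_1),\alpha^k(x_j)]$ while the remaining arguments of $\phi$ are all of the form $\alpha^k(x_m)$; stripping those $\alpha^k$'s as above yields precisely the $i=1$ terms of the expanded left-hand side. For the block $2\le i<j$ the factor $\theta(x_1)$ instead occupies the first slot of $\phi$, and the bracket is $[\alpha^k(x_i),\alpha^k(x_j)]$; after stripping the $\alpha^k$'s from the trailing slots of $\phi$ I would apply the standing hypothesis with $x=x_i$ and $y=\alpha^k(x_j)$, turning $\phi(\theta(x_1),\cdots)[\alpha^k(x_i),\alpha^k(x_j)]$ into $\phi(x_1,\cdots)[\theta(x_i),\alpha^k(x_j)]$, which matches the $i\ge 2$ terms of the left-hand side. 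Since the signs $(-1)^{i+j+1}$ and the index pattern are untouched throughout, the two sides agree term by term, giving $\theta\in Cent_{\alpha^k}(\mathfrak g_\phi)$.

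The index bookkeeping is routine; the one genuinely load-bearing step is the block $2\le i<j$, where $\theta$ has landed inside the scalar factor $\phi$ rather than inside a bracket. It is exactly the trade-off encoded in the hypothesis that transfers $\theta$ from $\phi$ into the bracket (while converting the accompanying $\alpha^k$ into $\theta$), allowing that term to be recognised as a summand of $\theta([x_1,\cdots,x_n]_\phi)$. This is the place where the precise form of the hypothesis is essential, and the step I expect to require the most care.
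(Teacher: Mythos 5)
Your proof is correct and follows essentially the same route as the paper's: expand both sides of the centroid identity for $[~,\cdots,~]_\phi$, apply the binary relation $\theta([x_i,x_j])=[\theta(x_i),\alpha^k(x_j)]$ on one side, and on the other side use the invariance of $\phi$ under $\alpha$ for the $i=1$ terms and the standing hypothesis for the $2\le i<j$ terms. You are in fact more explicit than the paper, which silently strips the $\alpha^k$'s from the arguments of $\phi$ and collapses both blocks of the sum in a single line.
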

\begin{proof}
Let $x_1,\cdots,x_n\in\mathfrak{g}$, we have
\begin{align*}
\theta([x_1,\cdots,x_n]_\phi)&=\sum_{i<j}^{n}(-1)^{i+j+1}\phi(x_1,\cdots,\hat{x_i},\cdots,\hat{x_j},\cdots,x_n)\theta([x_i,x_j])\\
&=\sum_{i<j}^{n}(-1)^{i+j+1}\phi(x_1,\cdots,\hat{x_i},\cdots,\hat{x_j},\cdots,x_n)[\theta(x_i),\alpha^k(x_j)].
\end{align*}
On the other hand, we have
\begin{align*}
[\theta (x_1),\alpha^k(x_2),\cdots,\alpha^k(x_n)]_\phi&=\sum_{i<j}^{n}(-1)^{i+j+1}\phi(\theta(x_1),\alpha^k(x_2),\cdots,\hat{x_i},\cdots,\hat{x_j},\cdots,\alpha^k(x_n))[\alpha^k(x_i),\alpha^k(x_j)]\\
&
=\sum_{i<j}^{n}(-1)^{i+j+1}\phi(x_1,\cdots,\hat{x_i},\cdots,\hat{x_j},\cdots,x_n)[\theta(x_i),\alpha^k(x_j)]\\&=\theta([x_1,\cdots,x_n]_\phi).
\end{align*}
\end{proof}

\section{Hom-Lie $n$-uplet system}
\subsection{Hom-Lie triple system}

\

In this  section, we start by recalling the definitions of Lie triple systems and Hom-Lie triple systems.
\begin{definition}
 (\cite{Lister})

A vector space $T$ together with a trilinear map $(x, y, z)\rightarrow[x,y,z]$ is called a
Lie triple system (LTS) if
\begin{enumerate}
  \item  $[x,x,z]=0,$
  \item $[x,y,z]+[y,z,x]+[z,x,y]=0$,
  \item $[u,v,[x,y,z]]=[[u,v,x],y,z]+[x,[u,v,y],z]+[x,y,[u,v,z]],$
\end{enumerate}
for all $x,y,z,u,v\in T$.
\end{definition}
\begin{definition}(\cite{Yau2012})
 A Hom-Lie triple system (Hom-LTS for short) is denoted by
$(T,[\cdot,\cdot,\cdot], \alpha)$, which consists of an $\mathbb{K}$-vector space $T$, a trilinear product $[\cdot,\cdot,\cdot]: T\times T\times T\rightarrow T$, and a linear map $\alpha:T\rightarrow T$, called the twisted map, such that $\alpha$ preserves the product and for all
$x,y,z,u,v\in T$,
\begin{enumerate}
  \item  $[x,x,z]=0,$
  \item $[x,y,z]+[y,z,x]+[z,x,y]=0$,
  \item $[\alpha(u),\alpha(v),[x,y,z]]=[[u,v,x],\alpha(y),\alpha(z)]+[\alpha(x),[u,v,y],\alpha(z)]+[\alpha(x),\alpha(y),[u,v,z]]$.
\end{enumerate}
\end{definition}

\begin{remark}
When  the twisted map $\alpha$ is equal to the identity map, a Hom-LTS is a LTS. So
LTS are special examples of Hom-LTS.
\end{remark}
\begin{definition} A Hom-Lie triple system
$(T,[\cdot,\cdot,\cdot], \alpha)$ is called multiplicative  if $\alpha([x,y,z])=[\alpha(x),\alpha(y),\alpha(z)]$, for all $x,y,z\in T$.
\end{definition}

\begin{thm}$($\cite{Yau2012}$)$\label{theoremDY}

Let $(\mathfrak{g},[\cdot,\cdot], \alpha)$ be a multiplicative  Hom-Lie algebra. Then
$$
\mathfrak{g}_T=(\mathfrak{g},[\cdot,\cdot,\cdot]=[\cdot,\cdot]\circ([\cdot,\cdot]\otimes \alpha), \alpha^2),$$
is a multiplicative Hom-Lie triple system.
\end{thm}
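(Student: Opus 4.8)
The plan is to check the three defining conditions of a multiplicative Hom-Lie triple system for the ternary bracket $[x,y,z]=[[x,y],\alpha(z)]$ equipped with twisting map $\alpha^2$, using throughout the multiplicativity $\alpha([a,b])=[\alpha(a),\alpha(b)]$ and the Hom-Jacobi identity in the twisted Leibniz form \eqref{RepresentationAdjointe}, which unfolds to
\begin{equation*}
[[a,b],\alpha(c)]=[\alpha(a),[b,c]]-[\alpha(b),[a,c]],\qquad a,b,c\in\mathfrak g. \tag{$\ast$}
\end{equation*}

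First I would dispose of the routine points. That $\alpha^2$ preserves the triple product is seen by pushing $\alpha^2$ through $[[x,y],\alpha(z)]$ with multiplicativity of $[\cdot,\cdot]$: both $\alpha^2([x,y,z])$ and $[\alpha^2(x),\alpha^2(y),\alpha^2(z)]$ equal $[[\alpha^2(x),\alpha^2(y)],\alpha^3(z)]$. Condition $[x,x,z]=0$ is immediate from skew-symmetry, since $[x,x]=0$. The cyclic condition $[x,y,z]+[y,z,x]+[z,x,y]=0$ is precisely the Hom-Jacobi identity: the three summands are $[[x,y],\alpha(z)]$, $[[y,z],\alpha(x)]$ and $[[z,x],\alpha(y)]$, whose sum is the negative of $\circlearrowleft_{x,y,z}[\alpha(x),[y,z]]=0$ after skew-symmetry.

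The heart of the matter is the last (derivation-type) axiom, for which I set $w=[u,v]$ so that $\alpha^k(w)=[\alpha^k(u),\alpha^k(v)]$. Unwinding the definition and using multiplicativity, the three terms on the right-hand side become
\begin{align*}
[[u,v,x],\alpha^2(y),\alpha^2(z)]&=[[[w,\alpha(x)],\alpha^2(y)],\alpha^3(z)],\\
[\alpha^2(x),[u,v,y],\alpha^2(z)]&=[[\alpha^2(x),[w,\alpha(y)]],\alpha^3(z)],\\
[\alpha^2(x),\alpha^2(y),[u,v,z]]&=[[\alpha^2(x),\alpha^2(y)],[\alpha(w),\alpha^2(z)]].
\end{align*}
Applying $(\ast)$ to the inner bracket of the first term (with $a=w$, $b=\alpha(x)$, $c=\alpha(y)$) produces a cross term $-[\alpha^2(x),[w,\alpha(y)]]$ that cancels the inner bracket of the second term, so the sum of the first two collapses to $[[\alpha(w),[\alpha(x),\alpha(y)]],\alpha^3(z)]$. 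A second application of $(\ast)$ to this expression (now with $a=\alpha(w)$, $b=[\alpha(x),\alpha(y)]$, $c=\alpha^2(z)$) yields $[\alpha^2(w),[[\alpha(x),\alpha(y)],\alpha^2(z)]]$ minus a term equal to the third summand above, which it therefore cancels. The survivor is $[\alpha^2(w),[[\alpha(x),\alpha(y)],\alpha^2(z)]]$, which is exactly the left-hand side $[\alpha^2(u),\alpha^2(v),[x,y,z]]$ after rewriting the latter through multiplicativity.

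I expect the only real difficulty to be bookkeeping in this last step: the identity conceals two nested applications of Hom-Jacobi, and success hinges on selecting the twisted form $(\ast)$ and tracking the powers of $\alpha$ so that the two auxiliary cross terms annihilate the second and third right-hand summands in turn. The choice of $\alpha(z)$ (rather than $z$) in the final slot of the triple product is what keeps the $\alpha$-exponents balanced; the naive bracket $[[x,y],z]$ would not close into a Hom-Lie triple system.
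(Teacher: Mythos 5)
Your verification is correct, and the mechanism is the same as the paper's: the paper does not prove this theorem directly (it is quoted from Yau) but recovers it as the $n=3$ case of Theorem \ref{n-upletfrom Lie}, whose proof via Lemma \ref{adj2} amounts to exactly your two successive applications of the Hom--Jacobi identity in the twisted Leibniz form $[[a,b],\alpha(c)]=[\alpha(a),[b,c]]-[\alpha(b),[a,c]]$, organized as an induction on the arity instead of an explicit cancellation of cross terms. Your bookkeeping of the powers of $\alpha$ checks out at every step.
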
\subsection{Hom-Lie $n$-uplet system}

\

In this  section we introduce  the   definitions of Lie $n$-uplet systems and multiplicative  Hom-Lie $n$-uplet systems. We give the analogue of Theorem \ref{theoremDY} in the Hom-Lie $n$-uplet systems case.
\begin{definition}

A vector space $\mathcal{G}$ together with a $n$-linear map $(x_1,\cdots, x_n)\rightarrow[x_1,\cdots, x_n]$   is called a
Lie $n$-uplet system  if
\begin{enumerate}
  \item  $[x,x,y_1,\cdots,y_{n-2}]=0,$ for all $x,y_1,\cdots,y_{n-2}\in \mathcal{G}$.
  \item $ \big[x_1,....,x_{n-1},[y_1,....,y_{n}]\big]=
 \dl\sum_{i=1}^{n}\big[y_1,....,y_{i-1},[x_1,....,x_{n-1},y_i]
  ,y_{i+1},...,y_n\big],$
\end{enumerate}
for all $x_1,\cdots,x_{n-1},y_1,\cdots,y_{n}\in \mathcal{G}$.
\end{definition}\begin{definition}

A vector space $\mathcal{G}$ together with a $n$-linear map $(x_1,\cdots, x_n)\rightarrow[x_1,\cdots, x_n]$ and a family
$\widetilde{\alpha}=(\alpha_i)_{1\leq i\leq n-1}$ of  linear maps $ \alpha_i:\ \ \mathcal{G}\longrightarrow \mathcal{G}$, is called a
Hom-Lie $n$-uplet system  if
\begin{enumerate}
  \item  $[x,x,y_1,\cdots,y_{n-2}]=0,$ for all $x,y_1,\cdots,y_{n-2}\in \mathcal{G}$.
  \item $ \big[\alpha_1(x_1),\dots,\alpha_{n-1}(x_{n-1}),[y_1,\dots,y_{n}]\big]=$\\ $  \dl\sum_{i=1}^{n}\big[\alpha_1(y_1),\dots,\alpha_{i-1}(y_{i-1}),[x_1,\dots,x_{n-1},y_i]
  ,\alpha_i(y_{i+1}),\dots,\alpha_{n-1}(y_n)\big],$\\
  for all $x_1,\cdots,x_{n-1},y_1,\cdots,y_{n}\in \mathcal{G}$.
\end{enumerate}
\end{definition}
\begin{definition} A Hom-Lie $n$-uplet system $(\mathcal{G},[~,\cdots, ~],\widetilde{\alpha})$ is called a multiplicative Hom-Lie $n$-uplet system if $\alpha_1=\dots=\alpha_{n-1}=\alpha$ and $\alpha([x_1,\cdots, x_n])=[\alpha(x_1),\cdots, \alpha(x_n)]$ for all $x_1,\cdots, x_n\in \mathcal G$.
\end{definition}

\begin{remark}
When  the twisted maps $\alpha_i$ is equal to the identity map, a Hom-Lie $n$-uplet systems is a Lie $n$-uplet systems. So
Lie $n$-uplet systems are special examples of Hom-Lie $n$-uplet systems.
\end{remark}
 Let $(\mathfrak{g},[~,~],\alpha)$ be a Hom-Lie algebra. We define the following $n$ linear map:
 \begin{eqnarray}
 [~,\cdots,~]_n:\mathfrak g^{\otimes n}&\longrightarrow& \mathfrak g\nonumber\\
(x_1,\cdots,x_n)&\longmapsto& \Big[x_1,\cdots,x_n\Big]_n=\Big[\big[[\dots[x_1,x_2],\alpha(x_3)],\alpha^2(x_4)\big]\cdots\alpha^{n-3}(x_{n-1})],\alpha^{n-2}(x_{n})\Big]\label{crochet_n}
\end{eqnarray}
For n=2, $[x_1,x_2]_2=[x_1,x_2]$ and for $n\geq 3$ we have $[x_1,\cdots,x_n]_n=[[x_1,\cdots,x_{n-1}]_{n-1},x_n]$.\\

The following result  gives a way to construct Hom-Lie $n$-uplet system starting from
a classical Lie $n$-uplet system and algebra endomorphisms.
\begin{proposition}Let $(\mathcal{G},[~,\cdots, ~])$ a Lie $n$-uplet system and $\alpha :\mathcal{G}\rightarrow\mathcal{G}$ a linear map such that $\alpha([x_1,\cdots,x_n])=[\alpha(x_1),\cdots,\alpha(x_n)]$.
Then $(\mathcal{G},[~,\cdots, ~]_\alpha,\alpha)$ be a Hom-Lie $n$-uplet system, where $[x_1,\cdots,x_n]_\alpha=[\alpha(x_1),\cdots,\alpha(x_n)]$, for all $x_1,\cdots,x_n\in \mathcal{G}$. \end{proposition}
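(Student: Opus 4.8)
The plan is to verify directly that the twisted bracket $[x_1,\cdots,x_n]_\alpha=[\alpha(x_1),\cdots,\alpha(x_n)]$ satisfies the two axioms of a Hom-Lie $n$-uplet system with all twisting maps equal to $\alpha$, using only the hypothesis that $(\mathcal{G},[~,\cdots,~])$ is a Lie $n$-uplet system and that $\alpha$ is an algebra endomorphism, i.e. $\alpha([x_1,\cdots,x_n])=[\alpha(x_1),\cdots,\alpha(x_n)]$. The multiplicativity condition $\alpha([x_1,\cdots,x_n]_\alpha)=[\alpha(x_1),\cdots,\alpha(x_n)]_\alpha$ is immediate from the hypothesis on $\alpha$, so the content is axioms (1) and (2) of the Hom-Lie $n$-uplet system.

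First I would check axiom (1). Since $[x,x,y_1,\cdots,y_{n-2}]=0$ holds in the underlying Lie $n$-uplet system, applying $\alpha$ to each slot and using the definition gives $[x,x,y_1,\cdots,y_{n-2}]_\alpha=[\alpha(x),\alpha(x),\alpha(y_1),\cdots,\alpha(y_{n-2})]=0$, because the first two arguments coincide. This step is essentially formal.

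The main work is axiom (2), the Hom-Nambu/Filippov identity for $[~,\cdots,~]_\alpha$ with twisting $\alpha$. I would expand the left-hand side using the definition:
\begin{equation*}
\big[\alpha(x_1),\dots,\alpha(x_{n-1}),[y_1,\dots,y_n]_\alpha\big]_\alpha
=\big[\alpha^2(x_1),\dots,\alpha^2(x_{n-1}),\alpha([y_1,\dots,y_n]_\alpha)\big].
\end{equation*}
Using multiplicativity, $\alpha([y_1,\dots,y_n]_\alpha)=[\alpha^2(y_1),\dots,\alpha^2(y_n)]$, so the left-hand side becomes $\big[\alpha^2(x_1),\dots,\alpha^2(x_{n-1}),[\alpha^2(y_1),\dots,\alpha^2(y_n)]\big]$. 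Now apply the \emph{untwisted} fundamental identity (axiom (2) of the Lie $n$-uplet system) to the elements $\alpha^2(x_i)$ and $\alpha^2(y_j)$; this rewrites the expression as $\sum_{i=1}^n\big[\alpha^2(y_1),\dots,[\alpha^2(x_1),\dots,\alpha^2(x_{n-1}),\alpha^2(y_i)],\dots,\alpha^2(y_n)\big]$. Finally, using multiplicativity in reverse, each inner bracket equals $\alpha([x_1,\dots,x_{n-1},y_i]_\alpha\text{-type expression})$, and collecting the outer $\alpha$'s matches exactly the right-hand side $\sum_{i=1}^n\big[\alpha(y_1),\dots,[x_1,\dots,x_{n-1},y_i]_\alpha,\dots,\alpha(y_n)\big]_\alpha$.

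The one point demanding genuine care is the bookkeeping of the powers of $\alpha$: every application of the definition of $[~,\cdots,~]_\alpha$ inserts one layer of $\alpha$, and commuting $\alpha$ past a bracket via the endomorphism property must be done consistently so that both sides land on the same power $\alpha^2$ applied to the original arguments. I expect this \emph{exponent matching} to be the only real obstacle; once the endomorphism property is invoked to push $\alpha$ uniformly onto all arguments, the identity reduces cleanly to the untwisted fundamental identity, which holds by hypothesis. I would therefore organize the computation so that both sides are first brought to the common form with $\alpha^2$ on every letter, reducing the Hom-identity to the ordinary identity.
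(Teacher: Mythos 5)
Your proof is correct and complete. The paper actually states this proposition without any proof, so there is nothing to compare against; your argument is the standard Yau-twist computation (push all arguments to $\alpha^2$ via the endomorphism property, apply the untwisted fundamental identity, then pull back), and the exponent bookkeeping you flag as the delicate point does check out: both sides reduce to $\sum_{i=1}^n\big[\alpha^2(y_1),\dots,[\alpha^2(x_1),\dots,\alpha^2(x_{n-1}),\alpha^2(y_i)],\dots,\alpha^2(y_n)\big]$.
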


\begin{thm}\label{n-upletfrom Lie}
Let $(\mathfrak{g},[\ ,\ ], \alpha)$ be a multiplicative  Hom-Lie algebra. Then
$$
\mathfrak{g}_n=(\mathfrak{g},[\ ,\cdots,\ ]_n, \alpha^{n-1})$$
is a multiplicative Hom-Lie $n$-uplet system.

\end{thm}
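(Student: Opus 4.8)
The goal is to check the three defining properties of a multiplicative Hom-Lie $n$-uplet system for $\mathfrak g_n=(\mathfrak g,[\ ,\dots,\ ]_n,\alpha^{n-1})$: that $\alpha^{n-1}$ is multiplicative for $[\ ,\dots,\ ]_n$, that $[x,x,y_1,\dots,y_{n-2}]_n=0$, and the fundamental identity (2) of the definition with every twisting map equal to $\alpha^{n-1}$. The first two are quick. Since $\mathfrak g$ is multiplicative, $\alpha([a,b])=[\alpha(a),\alpha(b)]$ and $\alpha$ commutes with its own powers; pushing $\alpha^m$ into the left-nested expression \eqref{crochet_n} by induction on the number of arguments gives $\alpha^m([x_1,\dots,x_k]_k)=[\alpha^m(x_1),\dots,\alpha^m(x_k)]_k$ for all $m,k$, and the case $m=n-1$, $k=n$ is exactly multiplicativity. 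For the second property, the innermost bracket of $[x,x,y_1,\dots,y_{n-2}]_n$ is $[x,x]=0$, so the whole left-nested element vanishes.

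The core of the argument is an auxiliary identity describing how a single element of $\mathfrak g$ acts on the iterated bracket: for all $u,z_1,\dots,z_m\in\mathfrak g$,
\begin{equation*}
[\alpha^{m-1}(u),[z_1,\dots,z_m]_m]=\sum_{i=1}^{m}[\alpha(z_1),\dots,\alpha(z_{i-1}),[u,z_i],\alpha(z_{i+1}),\dots,\alpha(z_m)]_m. \tag{$\ast$}
\end{equation*}
I would prove $(\ast)$ by induction on $m$. For $m=2$ it is the Hom-Jacobi condition in the rewritten form $[\alpha(u),[z_1,z_2]]=[[u,z_1],\alpha(z_2)]+[\alpha(z_1),[u,z_2]]$. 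For the step, use the recursion $[z_1,\dots,z_m]_m=[[z_1,\dots,z_{m-1}]_{m-1},\alpha^{m-2}(z_m)]$ coming from \eqref{crochet_n} and apply Hom-Jacobi in the same form with $a=\alpha^{m-2}(u)$, $b=[z_1,\dots,z_{m-1}]_{m-1}$ and $c=\alpha^{m-2}(z_m)$. Multiplicativity turns the resulting term $[\alpha(b),[a,c]]$ into the $i=m$ summand, while $[[a,b],\alpha(c)]$ becomes, after applying the inductive hypothesis to $[\alpha^{m-2}(u),[z_1,\dots,z_{m-1}]_{m-1}]$ and re-nesting each $(m-1)$-fold bracket with $\alpha(z_m)$ in the last slot, the summands $i=1,\dots,m-1$.

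Granting $(\ast)$, the fundamental identity for $[\ ,\dots,\ ]_n$ follows by one more application of Hom-Jacobi. Write $\bar x=[x_1,\dots,x_{n-1}]_{n-1}$ and $\bar y=[y_1,\dots,y_{n-1}]_{n-1}$, so that $[x_1,\dots,x_{n-1},w]_n=[\bar x,\alpha^{n-2}(w)]$ by the recursion. Using multiplicativity on the first $n-1$ entries, the left-hand side equals $[\alpha^{n-1}(\bar x),\alpha^{n-2}([y_1,\dots,y_n]_n)]$; expanding $[y_1,\dots,y_n]_n=[\bar y,\alpha^{n-2}(y_n)]$ and applying Hom-Jacobi splits it as
\begin{equation*}
[[\alpha^{n-2}(\bar x),\alpha^{n-2}(\bar y)],\alpha^{2n-3}(y_n)]+[\alpha^{n-1}(\bar y),[\alpha^{n-2}(\bar x),\alpha^{2n-4}(y_n)]].
\end{equation*}
The second summand is precisely the $i=n$ term of the right-hand side. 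For the first summand I would invoke $(\ast)$ at level $m=n-1$ with $u=\bar x$ and $z_j=\alpha^{n-2}(y_j)$: it rewrites $[\alpha^{n-2}(\bar x),\alpha^{n-2}(\bar y)]$ as $\sum_{i=1}^{n-1}[\alpha^{n-1}(y_1),\dots,[\bar x,\alpha^{n-2}(y_i)],\dots,\alpha^{n-1}(y_{n-1})]_{n-1}$, and bracketing each term on the right with $\alpha^{2n-3}(y_n)$ reconstitutes the summands $i=1,\dots,n-1$. This matches the right-hand side, establishing condition (2). (For $n=2$ the statement reduces to the Hom-Jacobi identity, and for $n=3$ the bracket $[x_1,x_2,x_3]_3=[[x_1,x_2],\alpha(x_3)]$ is the triple product of Theorem \ref{theoremDY}.)

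The step I expect to be most delicate is the bookkeeping of the powers of $\alpha$: aligning exponents when the outermost bracket is peeled off, correctly reinserting $\alpha^{m-2}(z_m)$ or $\alpha^{2n-3}(y_n)$ into the last slot of a nested bracket via the recursion, and matching the doubly-indexed family of terms produced after expansion. The conceptual input is small, one invocation of Hom-Jacobi at each stage together with multiplicativity, so once the exponents are tracked carefully the identities fall into place.
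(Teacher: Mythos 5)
Your proposal is correct and follows essentially the same route as the paper: your auxiliary identity $(\ast)$ is exactly the paper's Lemma \ref{adj2} (the statement that $\mathrm{ad}^2_x$ acts as a derivation of the iterated bracket), proved by the same induction on the recursion $[z_1,\dots,z_m]_m=[[z_1,\dots,z_{m-1}]_{m-1},\alpha^{m-2}(z_m)]$ together with Hom-Jacobi and multiplicativity. The only cosmetic difference is that the paper invokes the lemma at level $n$ directly, whereas you invoke it at level $n-1$ and perform one extra Hom-Jacobi split by hand, which is just the lemma's inductive step unrolled; your exponent bookkeeping checks out.
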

When $n=3$ we obtain the multiplicative Hom-Lie triple system constructed in Theorem \ref{theoremDY}.
To prove this theorem we need to the following lemma.
\begin{lem}\label{adj2}
Let $(\mathfrak{g},[\ ,\ ], \alpha)$ be a multiplicative Hom-Lie algebra, and $\text{ad}^2$ the adjoint map defined by
$$\text{ad}_x^2(y)=\text{ad}_x(y)=[x,y]$$ Then, we have
$$\text{ad}_{\alpha^{n-1}(x)}^2[y_1,\cdots,y_n]_n=\displaystyle\sum_{k=1}^n
[\alpha(y_1),\cdots,\alpha(y_{k-1}),\text{ad}_x^2(y_k),\alpha(y_{k+1}),\cdots,\alpha(y_n)]_n,$$
where $x\in\mathfrak{g}, y\in \mathfrak g$ and $(y_1,\cdots,y_n)\in\mathfrak{g}^n$.

\end{lem}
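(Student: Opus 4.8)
The plan is to argue by induction on $n$, using the recursive description of the bracket
$$[y_1,\cdots,y_n]_n=\big[[y_1,\cdots,y_{n-1}]_{n-1},\alpha^{n-2}(y_n)\big]$$
read off from \eqref{crochet_n}, together with the Hom-Jacobi identity and the multiplicativity of $\alpha$. For the base case $n=2$ the assertion reads $[\alpha(x),[y_1,y_2]]=[[x,y_1],\alpha(y_2)]+[\alpha(y_1),[x,y_2]]$, which is precisely the Hom-Jacobi condition $\circlearrowleft_{x,y_1,y_2}[\alpha(x),[y_1,y_2]]=0$ once the two cyclic terms are rewritten by skew-symmetry; equivalently it is \eqref{RepresentationAdjointe}.

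For the inductive step I would write $W=[y_1,\cdots,y_{n-1}]_{n-1}$, so that $[y_1,\cdots,y_n]_n=[W,\alpha^{n-2}(y_n)]$, and expand $\text{ad}_{\alpha^{n-1}(x)}^2[y_1,\cdots,y_n]_n=[\alpha^{n-1}(x),[W,\alpha^{n-2}(y_n)]]$ by applying the Hom-Jacobi identity to the triple $(\alpha^{n-2}(x),W,\alpha^{n-2}(y_n))$. Using $[\alpha^{n-2}(y_n),\alpha^{n-2}(x)]=\alpha^{n-2}([y_n,x])$ and skew-symmetry to tidy the result, this gives
$$[\alpha^{n-1}(x),[W,\alpha^{n-2}(y_n)]]=[\alpha(W),\alpha^{n-2}([x,y_n])]-[\alpha^{n-1}(y_n),[\alpha^{n-2}(x),W]].$$

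It then remains to recognize these two terms as, respectively, the $k=n$ summand and the total of the $k<n$ summands on the right-hand side of the lemma. For the first, the $k=n$ summand is $[\alpha(y_1),\cdots,\alpha(y_{n-1}),[x,y_n]]_n=[\alpha(W),\alpha^{n-2}([x,y_n])]$, where I use that $\alpha$ passes through the nested bracket, $\alpha(W)=[\alpha(y_1),\cdots,\alpha(y_{n-1})]_{n-1}$, and that $\alpha^{n-2}([x,y_n])=[\alpha^{n-2}(x),\alpha^{n-2}(y_n)]$. For the second, each summand with $k<n$ carries $\alpha(y_n)$ in its last slot, hence equals $\big[[\alpha(y_1),\cdots,[x,y_k],\cdots,\alpha(y_{n-1})]_{n-1},\alpha^{n-1}(y_n)\big]$; summing over $k=1,\cdots,n-1$ and extracting the common right entry by bilinearity, the induction hypothesis for $n-1$ collapses the inner sum to $[\alpha^{n-2}(x),W]$, producing $[[\alpha^{n-2}(x),W],\alpha^{n-1}(y_n)]=-[\alpha^{n-1}(y_n),[\alpha^{n-2}(x),W]]$. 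Matching these with the two terms above closes the induction.

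I expect the main obstacle to be purely bookkeeping rather than conceptual: one must keep the powers of $\alpha$ aligned — noticing that for $k<n$ the last slot produces $\alpha^{n-1}(y_n)$, whereas for $k=n$ the twist $\alpha^{n-2}$ lands on the entry $[x,y_n]$ — so that the two outputs of Hom-Jacobi match the split of the target sum, and one must track every skew-symmetry sign carefully. Once the recursion, multiplicativity, and the induction hypothesis are invoked in this order, the identity follows.
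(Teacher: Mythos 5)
Your proposal is correct and follows essentially the same route as the paper: induction on $n$ via the recursion $[y_1,\dots,y_n]_n=[[y_1,\dots,y_{n-1}]_{n-1},\alpha^{n-2}(y_n)]$, with the Hom-Jacobi identity splitting the outer bracket into the $k=n$ term (handled by multiplicativity of $\alpha$) and the $k<n$ terms (handled by the induction hypothesis). The only differences are cosmetic — you step from $n-1$ to $n$ rather than $n$ to $n+1$, and you invoke the induction hypothesis to collapse the sum rather than to expand the corresponding term.
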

\begin{proof}
  For $n=2$, using the Hom-Jacobi  identity we have
  \begin{align*}
\text{ad}_{\alpha(x)}^2[y,z]&=[\alpha(x),[y,z]]=[[x,y],\alpha(z)]+[\alpha(y),[x,z]]\\&=[\text{ad}_x^2(y),\alpha(z)]+[\alpha(y),\text{ad}_x^2(z)].
  \end{align*}
 Assume that the property is true up to order $n$, that is $$\text{ad}_{\alpha^{n-1}(X)}^2[y_1,\cdots,y_n]_n=\displaystyle\sum_{k=1}^n
[\alpha(y_1),\cdots,\alpha(y_{k-1}),\text{ad}_X^2(y_k),\alpha(y_{k+1}),\cdots,\alpha(y_n)]_n.$$
Let $x\in\mathfrak{g}$ and $(y_1,\cdots,y_{n+1})\in\mathfrak{g}^{n+1}$, we have
\begin{align*}
\text{ad}^2_{\alpha^n(x)}[y_1,\cdots,y_{n+1}]&=\text{ad}^2_{\alpha^n(x)}[[y_1,\cdots,y_n]_n,\alpha^{n-1}(y_{n+1})]_2\\&=
\Big[\text{ad}^2_{\alpha^{n-1}(x)}[y_1,\cdots,y_n]_n,\alpha^n(y_{n+1})\Big]_2+
\Big[[\alpha(y_1),\cdots,\alpha(y_n)]_n,\text{ad}^2_{\alpha^{n-1}(x)}(\alpha^{n-1}(y_{n+1}))\Big]_2\\&=
\displaystyle\sum_{k=1}^n\Big[[\alpha(y_1),\cdots,\alpha(y_{k-1}),\text{ad}^2_x(y_k),\alpha(y_{k+1}),\cdots,\alpha(y_n)]_n,\alpha^n(y_{n+1})\Big]\\&+
\Big[[\alpha(y_1),\cdots,\alpha(y_n)]_n,\alpha^{n-1}(\text{ad}^2_x(y_{n+1}))\Big]_2\\&=
\displaystyle\sum_{k=1}^n\Big[\alpha(y_1),\cdots,\alpha(y_{k-1}),\text{ad}^2_x(y_k),\alpha(y_{k+1}),\cdots,\alpha(y_n),\alpha(y_{n+1})\Big]_{n+1}\\&+
\Big[\alpha(y_1),\cdots,\alpha(y_n),\text{ad}^2_x(y_{n+1})\Big]_{n+1}\\&=
\displaystyle\sum_{k=1}^{n+1}\Big[\alpha(y_1),\cdots,\alpha(y_{k-1}),\text{ad}^2_x(y_k),\alpha(y_{k+1}),\cdots,\alpha(y_{n+1})\Big]_{n+1}.
\end{align*}
The lemma is proved.
\end{proof}

\begin{proof}(\textbf{Proof of Theorem \ref{n-upletfrom Lie}})
Let $X=(x_1,\cdots,x_{n-1})\in\mathfrak{g}^{n-1}$ and $Y=(y_1,\cdots,y_n)\in\mathfrak{g}^n$.\\

(i) Its easy to see that $[x_1,x_1,x_2,\cdots,x_{n-1}]_n=[[\cdots[[x_1,x_1]_2,\alpha(x_2)]_2,\alpha^2(x_3)]_2,\cdots]_2,\alpha^{n-2}(x_{n-1})]_2=0$\\

(ii) Using lemma \eqref{adj2}, we have
\begin{align*}
\Big[\alpha^{n-1}(x_1),\cdots,\alpha^{n-1}(x_{n-1}),[y_1,\cdots,y_n]_n\Big]_n&=
\Big[[\alpha^{n-1}(x_1),\cdots,\alpha^{n-1}(x_{n-1})]_{n-1},[\alpha^{n-2}(y_1),\cdots,\alpha^{n-2}(y_n)]_n\Big]_2\\&=
\text{ad}^2_{\alpha^{n-1}[x_1,\cdots,x_{n-1}]}([\alpha^{n-2}(y_1),\cdots,\alpha^{n-2}(y_n)]_n)\\&=
\displaystyle\sum_{k=1}^n\Big[\alpha^{n-1}(y_1),\cdots,\text{ad}^2_{[x_1,\cdots,x_{n-1}]}(\alpha^{n-2}(y_k)),\cdots,\alpha^{n-1}(y_n)\Big]_n\\&=
\displaystyle\sum_{k=1}^n\Big[\alpha^{n-1}(y_1),\cdots,[[x_1,\cdots,x_{n-1}],\alpha^{n-2}(y_k)]_2,\cdots,\alpha^{n-1}(y_n)\Big]_n\\&=
\displaystyle\sum_{k=1}^n\Big[\alpha^{n-1}(y_1),\cdots,[x_1,\cdots,x_{n-1},y_k]_n,\cdots,\alpha^{n-1}(y_n)\Big]_n.
\end{align*}

\end{proof}
\begin{example}

Using Exemple \ref{Sl2HomLieExemple} and by the theorem \eqref{n-upletfrom Lie}, for $\lambda\in\mathbb{R}^*$ we have:\\

For $n=3,\;(\mathfrak{sl}_2(\mathbb{R}),[~,~,~]_3,\alpha^2_\lambda)$ is a Hom-Lie triple system. The different brackets are as follows:\\

$[H,X,Y]_3=[[H,X]_{\alpha_{\lambda}},\alpha_{\lambda}(Y)]_{\alpha_{\lambda}}=2H $\;\;\;;\;\;\;$[H,X,H]_3=-4\lambda^4X$\;\;\;;\;\;\;$[H,Y,X]_3=4H$.\\

$[H,Y,H]_3=-\frac{4}{\lambda^4}Y$\;\;\;;\;\;\;$[X,Y,Y]_3=-\frac{2}{\lambda^4}Y$\;\;\;;\;\;\;$[X,Y,X]_3=2\lambda^4X$.\\

The other brackets are zeros.\\

For $n=4,\;(\mathfrak{sl}_2(\mathbb{R}),[~,~,~,~]_4,\alpha^3_\lambda)$ is a $4$-uplet Hom-Lie triple system. The different brackets are defined as follows:\\

$[H,X,H,H]_4=[[H,X,H]_3,\alpha^2(H)]_{\alpha_{\lambda}}=-4\lambda^4[X,H]_{\alpha_{\lambda}}=8\lambda^6X$\;\;\;;\;\;\;$[H,X,H,Y]_4=-4H$\\

$[H,Y,H,H]_4=-\frac{8}{\lambda^6}Y$\;\;\;;\;\;\;$[H,Y,H,X]_4=4H$\;\;\;;\;\;\;$[H,X,Y,X]_4=4\lambda^6X$\;\;\;;\;\;\;$[H,X,Y,Y]_4=-\frac{2}{\lambda^6}Y$\\

$[H,Y,X,X]_4=8\lambda^6X$\;\;\;;\;\;\;$[H,Y,X,Y]_4=-\frac{8}{\lambda^6}Y$\;\;\;;\;\;\;$[X,Y,X,Y]_4=2H$\\

$[X,Y,X,H]_4=-4\lambda^6X$\;\;\;;\;\;\;$[X,Y,Y,X]_4=2H$\;\;\;;\;\;\;$[X,Y,Y,H]_4=-\frac{4}{\lambda^6}Y$.\\

The other brackets are zeros.\\

\end{example}

\begin{proposition}
  Let $(\mathfrak{g},[\ ,\ ], \alpha)$ be a multiplicative Hom-Lie algebra and $D:\mathfrak{g}\rightarrow\mathfrak{g}$ an $\alpha^k$-derivation of $\mathfrak{g}$ for an integer $k$. Then $D$ is an $\alpha^k$-derivation of $\mathfrak{g}_n$.
\end{proposition}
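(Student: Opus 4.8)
The plan is to argue by induction on $n$, using the recursive definition $[x_1,\cdots,x_n]_n=[[x_1,\cdots,x_{n-1}]_{n-1},\alpha^{n-2}(x_n)]$ of the $n$-uplet bracket together with the two defining properties of the $\alpha^k$-derivation $D$ on the binary bracket, namely $D\circ\alpha=\alpha\circ D$ and the Leibniz rule $D([a,b])=[D(a),\alpha^k(b)]+[\alpha^k(a),D(b)]$. Before starting the induction I would record two elementary consequences that let me move $\alpha$-powers freely through the iterated brackets: first, $D\circ\alpha^j=\alpha^j\circ D$ for every $j$ (in particular $D$ commutes with the twist $\alpha^{n-1}$ of $\mathfrak{g}_n$); and second, $\alpha^k([x_1,\cdots,x_m]_m)=[\alpha^k(x_1),\cdots,\alpha^k(x_m)]_m$ for all $m$, which follows by a one-line induction from multiplicativity of $\alpha$ on the binary bracket. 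The goal is then to establish the identity
$$D([x_1,\cdots,x_n]_n)=\dl\sum_{i=1}^{n}[\alpha^k(x_1),\cdots,\alpha^k(x_{i-1}),D(x_i),\alpha^k(x_{i+1}),\cdots,\alpha^k(x_n)]_n.$$

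For the base case $n=2$ the system $\mathfrak{g}_2$ is just $(\mathfrak{g},[~,~],\alpha)$, so the claim is the hypothesis. For the inductive step I would assume the identity for $[~,\cdots,~]_{n-1}$ and expand $D([x_1,\cdots,x_n]_n)$ through the recursion and the binary Leibniz rule into the two pieces $[D([x_1,\cdots,x_{n-1}]_{n-1}),\alpha^{k+n-2}(x_n)]$ and $[\alpha^k([x_1,\cdots,x_{n-1}]_{n-1}),\alpha^{n-2}(D(x_n))]$. The second piece, after using $D\circ\alpha^{n-2}=\alpha^{n-2}\circ D$ and folding the binary bracket back via the recursion, collapses to the single summand $[\alpha^k(x_1),\cdots,\alpha^k(x_{n-1}),D(x_n)]_n$, i.e. the $i=n$ term. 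Into the first piece I would insert the inductive hypothesis for $[~,\cdots,~]_{n-1}$ and again refold each resulting $(n-1)$-fold bracket against $\alpha^{n-2}(\alpha^k(x_n))$ into an $n$-fold bracket, producing exactly the summands $i=1,\cdots,n-1$. Adding the two contributions yields the desired identity for $\mathfrak{g}_n$, and the commutation $D\circ\alpha^{n-1}=\alpha^{n-1}\circ D$ is already in hand.

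The only delicate point is the bookkeeping of $\alpha$-exponents: one must check that the twist on the last slot of the refolded bracket is precisely $\alpha^{n-2}$ as demanded by the recursion, and that $\alpha^k$ and $\alpha^{n-2}$ commute with each other and with $D$ so that each slot ends up carrying exactly $\alpha^k$ (or $D$) as required. I do not expect a genuine obstacle beyond this accounting; the mechanism is simply that $D$ obeys a Leibniz rule on the outermost binary bracket while the inner $(n-1)$-fold bracket is governed inductively.
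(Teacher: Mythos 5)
Your proposal is correct and follows essentially the same route as the paper's proof: induction on $n$ via the recursion $[x_1,\cdots,x_n]_n=[[x_1,\cdots,x_{n-1}]_{n-1},\alpha^{n-2}(x_n)]$, the binary Leibniz rule, the commutation $D\circ\alpha=\alpha\circ D$, and multiplicativity of $\alpha$ to push $\alpha^k$ inside the inner bracket before refolding. The only cosmetic difference is that the paper anchors the induction at $n=3$ with an explicit computation, whereas you start at the trivial case $n=2$.
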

\begin{proof}
By recurrence\\

  For $n=3$.
  Let $x,y,z\in\mathfrak{g}$, we have
  \begin{align*}
  D([x,y,z])&=D([[x,y],\alpha(z)])\\&=[D([x,y]),\alpha^{k+1}(z)]+[[\alpha^k(x),\alpha^k(y)],D(\alpha(z))]\\&=[[D(x),\alpha^k(y)],\alpha^{k+1}(z)]+
  [[\alpha^k(x),D(y)],\alpha^{k+1}(z)]+[[\alpha^k(x),\alpha^k(y)],\alpha(D(z))]\\&=[D(x),\alpha^k(y),\alpha^k(z)]+[\alpha^k(x),D(y),\alpha^k(z)]+
  [\alpha^k(x),\alpha^k(y),D(z)]
  \end{align*}
  Now, suppose that the property is true to order $n-1$, i.e: $$D([x_1,\cdots,x_{n-1}]_{n-1})=\displaystyle\sum_{i=1}^n[\alpha^k(x_1),\cdots,D(x_k),\cdots,\alpha^k(x_{n-1})]_{n-1}.$$
   Let $(x_1,\cdots,x_n)\in\mathfrak{g}^n$, then
  \begin{align*}
D([x_1,\cdots,x_n]_n)&=D(\Big[[x_1,\cdots,x_{n-1}]_{n-1},\alpha^{n-2}(x_n)\Big])\\&=
\Big[D([x_1,\cdots,x_{n-1}]_{n-1}),\alpha^{n+k-2}(x_n)\Big]+\Big[[\alpha^k(x_1),\cdots,\alpha^k(x_{n-1})]_{n-1},D(\alpha^{n-2}(x_n))\Big]\\&=
  \Big[D([x_1,\cdots,x_{n-1}]_{n-1}),\alpha^{n-2}(\alpha^k(x_n))\Big]+\Big[[\alpha^k(x_1),\cdots,\alpha^k(x_{n-1})]_{n-1},\alpha^{n-2}(D(x_n))\Big]\\&=
  \displaystyle\sum_{i=1}^{n-1}\Big[[\alpha^k(x_1),\cdots,D(x_i),\cdots,\alpha^k(x_{n-1})]_{n-1},\alpha^{n-2}(\alpha^k(x_n))\Big]+
  [\alpha^k(x_1),\cdots,\alpha^k(x_{n-1}),D(x_n)]_n\\&=
  \displaystyle\sum_{i=1}^{n-1}[\alpha^k(x_1),\cdots,D(x_i),\cdots,\alpha^k(x_{n-1}),\alpha^k(x_n)]_n+
  [\alpha^k(x_1),\cdots,\alpha^k(x_{n-1}),D(x_n)]_n\\&=
  \displaystyle\sum_{i=1}^n[\alpha^k(x_1),\cdots,D(x_i),\cdots,\alpha^k(x_{n-1}),\alpha^k(x_n)]_n
  \end{align*}

\end{proof}
\begin{proposition}Let $(\mathfrak{g},[\ ,\ ], \alpha)$ be a multiplicative Hom-Lie algebra and $D,D',\cdots,D^{(n-1)}$ an endomorphisms of $\mathfrak{g}$ such that $D^{(i)} $ is an $\alpha^k$-quasiderivation with associated endomorphism $D^{(i+1)} $ for  $0\leq i\leq n-2$. Then the $(n + 1)$-tuple $(D,D,D',D'', \cdots ,D^{(n-1)})$ is an $(n + 1)$-ary
$\alpha^k$-derivation of $\mathfrak{g}_{n}$.
\end{proposition}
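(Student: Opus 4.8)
The plan is to argue by induction on the arity $n$, using the recursion $[x_1,\cdots,x_{n+1}]_{n+1}=\big[[x_1,\cdots,x_n]_n,\alpha^{n-1}(x_{n+1})\big]$ that follows from the definition \eqref{crochet_n} of the bracket of $\mathfrak g_n$. Relabel the $(n+1)$-tuple $(D,D,D',\cdots,D^{(n-1)})$ as $(\widetilde D^{(0)},\cdots,\widetilde D^{(n)})$ with $\widetilde D^{(0)}=D$ and $\widetilde D^{(j)}=D^{(j-1)}$ for $1\le j\le n$; the claim is then the defining identity \eqref{genereliseDerivetin} of an $(n+1)$-ary $\alpha^k$-derivation of $\mathfrak g_n$, namely
$$\widetilde D^{(n)}([x_1,\cdots,x_n]_n)=\sum_{i=1}^n[\alpha^k(x_1),\cdots,\widetilde D^{(i-1)}(x_i),\cdots,\alpha^k(x_n)]_n.$$

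For the base case $n=2$ one has $\mathfrak g_2=\mathfrak g$, and the identity reads $D'([x_1,x_2])=[D(x_1),\alpha^k(x_2)]+[\alpha^k(x_1),D(x_2)]$, which is precisely the hypothesis that $D=D^{(0)}$ is an $\alpha^k$-quasiderivation of $\mathfrak g$ with associated endomorphism $D'=D^{(1)}$. For the inductive step I would apply the binary $\alpha^k$-quasiderivation identity for $D^{(n-1)}$, whose associated endomorphism is $D^{(n)}$, to the outer bracket with $u=[x_1,\cdots,x_n]_n$ and $v=\alpha^{n-1}(x_{n+1})$:
$$D^{(n)}([u,v])=\big[D^{(n-1)}(u),\alpha^{k+n-1}(x_{n+1})\big]+\big[\alpha^k(u),\alpha^{n-1}(D^{(n-1)}(x_{n+1}))\big],$$
where I use that $D^{(n-1)}$ commutes with $\alpha$ and the multiplicativity $\alpha^k(u)=[\alpha^k(x_1),\cdots,\alpha^k(x_n)]_n$. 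Expanding $D^{(n-1)}(u)$ in the first summand by the inductive hypothesis for $\mathfrak g_n$ and recombining each piece via $\big[[w_1,\cdots,w_n]_n,\alpha^{n-1}(\alpha^k(x_{n+1}))\big]=[w_1,\cdots,w_n,\alpha^k(x_{n+1})]_{n+1}$ yields exactly the slots $i=1,\cdots,n$ of the derivation identity for $\mathfrak g_{n+1}$, while the second summand supplies the missing slot $[\alpha^k(x_1),\cdots,\alpha^k(x_n),D^{(n-1)}(x_{n+1})]_{n+1}$. This closes the induction.

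The step that I expect to demand the most care is the index bookkeeping forced by $D$ appearing twice at the head of the tuple: one must check that, for $1\le i\le n$, the map $\widetilde D^{(i-1)}$ standing in slot $i$ of the $n$-ary identity is literally the map required in slot $i$ of the $(n+1)$-ary identity, so that the inductive hypothesis substitutes verbatim and only the new quasiderivation relation ($D^{(n-1)}$ with associated $D^{(n)}$, furnished precisely by extending the hypothesis from $0\le i\le n-2$ to $0\le i\le n-1$) is consumed in passing from the $n$-ary to the binary bracket. The remaining ingredients, the multiplicativity of $\alpha$ and the commutation $[D^{(i)},\alpha]=0$ for the quasiderivations involved, are routine to carry through.
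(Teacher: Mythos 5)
Your proof is correct and follows essentially the same route as the paper: peel off the last argument via $[x_1,\dots,x_n]_n=\big[[x_1,\dots,x_{n-1}]_{n-1},\alpha^{n-2}(x_n)\big]$ and apply the binary quasiderivation identity for $D^{(i)}$ with associated endomorphism $D^{(i+1)}$ at each level, the paper merely writing your induction as an iterated unwinding with a ``$\vdots$''. The one caveat is the commutation $[D^{(i)},\alpha]=0$ that you invoke to move $\alpha^{n-1}$ past $D^{(n-1)}$: this is not part of the paper's definition of an $\alpha^k$-quasiderivation, although the paper's own proof tacitly needs the same fact (it suppresses the $\alpha^{n-2}$ twist on the last slot altogether), so you share, and at least make explicit, a hypothesis the paper leaves unstated.
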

\begin{proof}
Let $x_1,\cdots,x_n\in \mathfrak{g}$, then
\begin{align*}
D^{(n-1)}([x_1,\cdots,x_n]_n)&=D^{(n-1)}([[x_1,\cdots,x_{n-1}]_{n-1},x_n])\\
&=[D^{(n-2)}([x_1,\cdots,x_{n-1}]_{n-1}),\alpha^k(x_n)]+[[\alpha^k(x_1),\cdots,\alpha^k(x_{n-1})]_{n-1},D^{(n-2)}(x_n)]\\
&\vdots\\
&=[D(x_1),\alpha^k(x_2),\cdots,\alpha^k(x_n)]_n+[\alpha^k(x_1),D(x_2),\cdots,\alpha^k(x_n)]_n\\
&+[\alpha^k(x_1),\alpha^k(x_2),D'(x_3),\cdots,\alpha^k(x_n)]_n+\cdots+[\alpha^k(x_1),\cdots,\alpha^{k}(x_{n-1}),D^{(n-2)}(x_n)]_n)
\end{align*}
Therefore  $(n + 1)$-tuple $(D,D,D',D'', \cdots ,D^{(n-1)})$ is an $(n + 1)$-ary
$\alpha^k$-derivation of $\mathfrak{g}_{n}$.
\end{proof}

\noindent\textbf{Acknowledgments}

We would like to thank Abdenacer Makhlouf and Viktor Abramov for helpful discussions and for their interest in this work.


\begin{thebibliography}{99}


\bibitem{Abramov2018} V. Abramov,\emph{ Weil Algebra, 3-Lie Algebra and BRS Algebra},  arXiv:1802.05576, (2018)

\bibitem{AmmarSamiMakhloufNov2010} F. Ammar, S. Mabrouk and A. Makhlouf, \emph{Representation and cohomology of multiplicative $n$-ary Hom-Nambu-Lie algebras},  J. Geom. Phys. 61 (10) (2011) 1898–1913
\bibitem{AmmarMakhloufJA2010}  F. Ammar  and A. Makhlouf,  \emph{Hom-Lie algebras and Hom-Lie admissible superalgebras}, Journal of Algebra, Vol. 324 (7), (2010)  1513--1528
\bibitem{Ataguema&Makhlouf&Silvestrov} H. Ataguema, A. Makhlouf, S. Silvestrov,  \emph{Generalization of n-ary Nambu
algebras and beyond.} J. Math. Phys. 50, 083501 (2009)

\bibitem{Arnlind&kitoni&makhlouf&Silvi} J. Arnlind, A. Kitouni, A. Makhlouf, S.D. Silvestrov, \emph{Structure and cohomology
of 3-Lie algebras induced by Lie algebras,} arXiv:1312.7599v

\bibitem{Arnlind&kitoni&makhlouf} J. Arnlind, A. Kitouni, A. Makhlouf, \emph{ Construction of n-Lie algebras and
n-ary Hom-Nambu-Lie algebras,} J. Math. Phys. \textbf{52}, 123502–123515 (2011)

\bibitem{Arnlind&Makhlouf&Silvestrov} J. Arnlind, A. Makhlouf, S. Silvestrov, \emph{Ternary Hom-Nambu-Lie algebras
induced by Hom-Lie algebras,} J. Math. Phys. 51, 043515, 11 (2010)

\bibitem{Arnlind&Makhlouf&Silvestrov1} J. Arnlind, A. Makhlouf, S. Silvestrov, \emph{Construction of n-Lie algebras and
n-ary Hom-Nambu-Lie algebras,} J. Math. Phys. 52, 123502, 13 (2011)


\bibitem{Azcarraga&Izquierdo} J. A. de Azc\'arraga, J. M. Izquierdo, \emph{ k-Leibniz algebras from lower order ones: From Lie triple
to Lie $\ell$-ple systems,}  J. of Math. Phy. \textbf{54}, 093510 (2013)

\bibitem{Basu&Harvey} A. Basu, J.A. Harvey, \emph{The M2-M5 brane system and a generalized Nahm's equation,} Nuclear Phys. B 713 (2005)
\bibitem{Bagger&Lambert} J. Bagger, N. Lambert, \emph{Gauge symmetry and supersymmetry of multipleM2-branes,} Phys. Rev. D 77 (2007) 065008.
\bibitem{BenayadiMakhlouf} S. Benayadi and A. Makhlouf, \emph{Hom-Lie algebras with symmetric invariant nondegenerate bilinear forms,} J. Geom. Phys. 76, 38–60 (2014)

\bibitem{Beites&Kaygorodov&Popov} P. D. Beites, I. Kaygorodov, Y. Popov, \emph{Generalized derivations of multiplicative n-ary Hom-$\Omega$ color algebras,} Bull. of
the Malay. Math. Sci. Soc. 41 (2018), DOI: 10.1007/s40840-017-0486-8


\bibitem{Chen&Ma&Ni}  L.Chen, Y. Ma, L. Ni, \emph{Generalized Derivations of Lie color algebras,} Results Math., 63 (2013), 3-4, 923–936.
\bibitem{Chen&Ma&Zhou} L. Chen, Y. Ma, J. Zhou, \emph{Generalized Derivations of Lie triple systems,}  Open Math. 14 (2016), 260–271


\bibitem{Filippov} V. Filippov, \emph{$\delta-$Derivations of Lie algebras,} Siberian Mathematical Journal, 39 (1998), 6, 1218–1230.




\bibitem{Daletskii&Takhtajan} Y.L.Daletskii, L.A. Takhtajan, \emph{Leibniz and Lie algebra structures for Nambu
algebra,} Lett. Math. Phys. 39, 127–141 (1997)







\bibitem{Hartwig&Larsson&Silvestrov} J.T.Hartwig, D. Larsson, and S.D. Silvestrov, \emph{Deformations of Lie algebras using $\sigma$-derivations,} J. Algebra \textbf{295}, 314 (2006)
\bibitem{Gautheron} Ph, Gautheron, \emph{Some remarks concerning Nambu mechanics,} Lett. Math. Phys.
37(1), 103–116 (1996)

\bibitem{Kaygorodov} I. Kaygorodov, \emph{$\delta$-Derivations of $n$-ary algebras,} Izvestiya: Mathematics, 76 (2012), 5, 1150–1162.
\bibitem{Kaygorodov1} I. Kaygorodov, \emph{ $(n+1)-$ary derivations of semisimple Filippov algebras,} Math. Notes, 96 (2014), 2, 208–216.

\bibitem{Larsson&Silvestrov} D. Larsson and S.D. Silvestrov, \emph{Quasi-Hom-Lie algebras, central extensions and 2-cocycle-like identities,} J. Algebra
\textbf{288}, 321 (2005)



\bibitem{Lister} W. Lister, \emph{A structure theory of Lie triple systems,} Trans. Amer. Math. Soc.,{\bf  72} (1952), 217-242

\bibitem{Ma&Chen} Y.Ma, L.Y. Chen, \emph{On the cohomology and extensions of first-class n-Lie
superalgebras,} Comm. Algebra 42(10), 4578–4599 (2014)

\bibitem{Makhlouf&Silvestrov1} A. Makhlouf, S. D. Silvestrov, \emph{Hom-algebra structure,} J. Gen. Lie Theory Appl. 2 (2008) 51-64


%
%
%
%
%
%
\bibitem{Nam} Y. Nambu, \emph{Generalized Hamiltonian mechanics,} Phys. Rev. D7, 2405-2412 (1973)
\bibitem{Takhtajan1}  L.Takhtajan,  \emph{On foundation of the generalized Nambu
mechanics,} Comm. Math. Phys. \textbf{160 }(1994), 295-315
%
%

\bibitem{Zhang&Zhang} R. Zhang, Y. Zhang, \emph{Generalized derivations of Lie superalgebras,} Comm. Algebra, 38:10 (2010), 3737–3751

\bibitem{Yau2012}
D. Yau, \emph{ On n-ary Hom-Nambu and Hom-Nambu-Lie algebras,} J. Geom. Phys.,{\bf  62}
  (2012), 506-522
\end{thebibliography}
\end{document}